\journal{arXiv}
\theoremstyle{plain}
\newtheorem{theorem}{Theorem}
\newtheorem{proposition}{Proposition}
\newtheorem{lemma}{Lemma}
\newtheorem{corollary}{Corollary}
\theoremstyle{definition}
\newtheorem{remark}{Remark}
\newcommand{\pRz}{\mathbb R_+}
\newcommand{\IRp}{\pRz}
\newcommand{\IR}{\Rz}
\newcommand{\IL}{\mathbb L}
\newcommand{\Uz}{\mathbb U}
\newcommand{\sM}{\mathcal M}
\newcommand{\Rz}{\mathbb R}
\newcommand{\E}{\mathbb E}
\newcommand{\Var}{\mathbb V\mathrm{ar}}
\newcommand{\nset}[1]{{\left\llbracket #1\right\rrbracket }}
\DeclareMathOperator*{\argmin}{arg\,min}
\newcommand{\So}{f} 
\DeclareMathSymbol{\minus}{\mathbin}{AMSa}{"39}
\newcommand{\highlight}{\textit}
\begin{document}

\begin{frontmatter}

\title{Multiplicative deconvolution estimator based on a ridge approach}

\author[1]{Sergio Brenner Miguel \corref{mycorrespondingauthor}}

\address[1]{Institut f\"ur Angewandte
	Mathematik, M$\Lambda$THEM$\Lambda$TIKON, Im Neuenheimer Feld 205,
	D-69120 Heidelberg, Germany}

\cortext[mycorrespondingauthor]{Corresponding author. Email address: \url{brennermiguel@math.uni-heidelberg.de}}

\begin{abstract}
We study the non-parametric estimation of an unknown density $f$ with support on $\mathbb R_+$ based on an i.i.d. sample with multiplicative measurement errors. 
The proposed fully-data driven procedure consists of  the estimation of the Mellin
transform of the density $f$ and a regularisation of the inverse of
the Mellin transform by a ridge approach. The upcoming bias-variance trade-off is dealt with by a
data-driven  choice of the ridge parameter. In order to discuss the bias term, we consider the
\textit{Mellin-Sobolev spaces} which characterise the
regularity of the unknown density $f$ through the decay of its Mellin
transform. Additionally,  we show minimax-optimality over \textit{Mellin-Sobolev spaces} of the
ridge density estimator. 
\end{abstract}

\begin{keyword} 
Density estimation \sep
multiplicative measurement errors \sep
Mellin transform\sep
ridge estimator \sep
 minimax theory\sep
 inverse problem \sep
 adaptation
\MSC[2020] Primary 62G05 \sep secondary 62G07, 62C20
\end{keyword}

\end{frontmatter}

\section{Introduction\label{i}}

In this paper we are interested in estimating the unknown density
$f:\IRp \rightarrow \IRp$ of
a positive random variable $X$ given  independent and identically
distributed (i.i.d.) copies of $Y=XU$, where $X$ and $U$ are
independent of each other and $U$ has a known density $g:\IRp
\rightarrow \IRp$. In this setting the density  $f_{Y}:\IRp
\rightarrow \pRz$ of $ Y$ is given by
\begin{equation*}
f_{ Y}(y)=(f * g)(y):= \int_{\pRz} f(x)g( y/ x)x^{-1}d x\quad\forall  y\in\pRz.
\end{equation*}
 Here $*$ denotes multiplicative convolution. The estimation of
$f$ using an i.i.d. sample $ Y_1, \dots,  Y_n$ from $f_{ Y}$ is thus an
inverse problem called
multiplicative deconvolution.  \\
This particular model was studied by \cite{Brenner-MiguelComteJohannes2020}. Inspired by the work \cite{BelomestnyGoldenshluger2020}, the authors of \cite{Brenner-MiguelComteJohannes2020} introduced an estimator based on the estimation of the Mellin transform of the unknown density $f$ and a spectral cut-off regularisation of the inverse of the Mellin transform. In \cite{BelomestnyGoldenshluger2020} a pointwise kernel density estimator was proposed and investigated, while the authors of \cite{Brenner-MiguelComteJohannes2020} studied the global risk of the density estimation. For the model of multiplicative measurement, the multivariate case of global density estimaton, respectively the univariate case of global survival function estimation, was considered by \cite{Brenner-Miguel2021}  , respectively \cite{Brenner-MiguelPhandoidaen2021},  based on a spectral cut-off approach. \\
In this work, we will borrow the ridge approach from the additive deconvolution literature, for instance used by \cite{HallMeister2007} and \cite{Meister2009}, to build a new density estimator and compare it with the spectral cut-off estimator proposed by \cite{Brenner-MiguelComteJohannes2020}. The contribution of this work to the existing literature is the inclusion of the ridge approach and the comparison to the spectral cut-off approach. We discuss in which situations the corresponding estimators are comparable, respectively when the ridge approach is favourable. Furthermore, the ridge approach can be used  for furture works considering oscillatory error densities or unknown error densities, compare  \cite{HallMeister2007} and \cite{Meister2009}.
\subsection{Related works}
 The model of multiplicative measurement errors was motivated in the work of \cite{BelomestnyGoldenshluger2020} as a generalisation of several models, for instance the multiplicative censoring model or the stochastic volatility model. \\
\cite{Vardi1989} and \cite{VardiZhang1992} introduce and analyse
intensively \textit{multiplicative censoring}, which corresponds to
the particular multiplicative deconvolution problem with
multiplicative error $U$ uniformly distributed on $[0,1]$.  This
model is often applied in survival analysis as explained and
motivated in \cite{EsKlaassenOudshoorn2000}. The estimation of
the cumulative distribution function of $X$ is discussed in
\cite{VardiZhang1992} and
\cite{AsgharianWolfson2005}. Series expansion methods are
studied in \cite{AndersenHansen2001} treating the model as an
inverse problem. The density estimation in a multiplicative
censoring model is considered in \cite{BrunelComteGenon-Catalot2016}
using a kernel estimator and a convolution power kernel
estimator. Assuming an uniform error distribution on an interval
$[1-\alpha, 1+\alpha]$ for $\alpha\in (0,1)$, \cite{ComteDion2016}
analyse a projection density estimator with respect to the Laguerre
basis.  \cite{BelomestnyComteGenon-Catalot2016} investigate a
beta-distributed error $U$.\\
In the work of \cite{BelomestnyGoldenshluger2020}, the authors used the Mellin transform to construct a kernel estimator for the pointwise density estimation.  Moreover, they point out that the following widely used
naive approach is a special case of their estimation
strategy. Transforming the data by applying the logarithm to the model
$Y=XU$ writes as $\log(Y)=\log(X)+\log(U)$. In other words,
multiplicative convolution becomes convolution for the
$\log$-transformed data. As a consequence, the density of $\log(X)$
is eventually estimated employing usual strategies for
non-parametric deconvolution problems (see for example
\cite{Meister2009}) and then transformed back to an estimator of $f$.
However, it is difficult to interpret regularity conditions on the
density of $\log(X)$. Furthermore, the analysis of the global risk of
an estimator using this naive approach is challenging as
\cite{ComteDion2016}
pointed out. 
\subsection{Organisation}
The paper is organised as follows. In Section \ref{i} we recapitulate the definition of the Mellin transform and collect its frequently used properties. To be able to compare our estimator with the spectral cut-off estimator proposed by  \cite{Brenner-MiguelComteJohannes2020}, we will revisit its construction and state the necessary assumption for the estimator to be well-defined and present the ridge density estimator. In Section \ref{mt} we will show that the ridge density estimator is minimax-optimal over the Mellin-Sobolev spaces, by stating an upper bound and using the lower bound result given in \cite{Brenner-Miguel2021}. A data-driven procedure based on a Goldenshluger-Lepski method is described and analysed in Section \ref{dd}. Finally, results of a simulation
study are reported in section \ref{si} which visualize the reasonable finite
sample performance of our estimators. The proofs of Section \ref{mt} and
Section \ref{dd} are postponed to the Appendix.
\subsection{The spectral cut-off and ridge estimator}
We define for any weight function
$\omega:\IR \rightarrow \IR_+$ the corresponding
weighted norm  by $\|h\|_{\omega}^2 := \int_0^{\infty}
|h(x)|^2\omega(x)dx $ for a measurable, complex-valued function $h$. Denote by
$\IL^2(\IR_+,\omega)$ the set of all measurable, complex-valued functions with
finite $\|\, .\,\|_{\omega}$-norm and by $\langle h_1, h_2
\rangle_{\omega} := \int_0^{\infty}  h_1(x)
\overline{h_2}(x)\omega(x)dx$ for $h_1, h_2\in \IL^2(\IR_+,\omega)$
the corresponding weighted scalar product. Similarly, define $\IL^2(\IR):=\{ h:\IR \rightarrow \mathbb C\, \text{ measurable }: \|h\|_{\IR}^2:= \int_{-\infty}^{\infty} h(t)\overline{h(t)} dt <\infty \}$ and $\IL^1(\Omega,\omega):=\{h: \Omega \rightarrow \mathbb C: \|h\|_{\IL^1(\Omega,\omega)}:= \int_{\Omega} |h(x)|\omega(x)dx < \infty \}.$
In the introduction we already mentioned that the density $f_Y$ of $Y_1$ can be written as the multiplicative convolution of the densities $f$ and $g$. We will now define this convolution in a more general setting. Let $c\in \IR$. For two functions $h_1,h_2\in \IL^1(\IR_+, x^{c-1})$, where we use the notation $x^{c-1}$ for the weight function $x\mapsto x^{c-1}$, we define the multiplicative convolution $h_1*h_2$ of $h_1$ and $h_2$ by
\begin{align}\label{eq:mult:con}
(h_1*h_2)(y):=\int_0^{\infty} h_1(y/x) h_2(x) x^{-1} dx, \quad y\in \IR_+.
\end{align}
In fact, one can show that the function $h_1*h_2$ is well-defined, $h_1*h_2=h_2*h_1$ and $h_1*h_2 \in \IL^1(\IR,x^{c-1})$, compare \cite{Brenner-Miguel2021}. It is worth pointing out, that the definition of the multiplicative convolution in equation \ref{eq:mult:con} is independent of the model parameter $c\in \IR$. We also know for densities $h_1,h_2$ that $h_1,h_2\in \IL^1(\IR_+,x^0)$. If additionally $h_1\in \IL^2(\IR_+, x^{2c-1})$  then $h_1*h_2 \in \IL^2(\IR_+,x^{2c-1})$. 

\paragraph{Mellin transform}
We will now define the Mellin transform for $\IL^1(\IRp, x^{c-1})$ functions and present the convolution theorem. Further properties of the Mellin transform, which will be used in the upcoming theory, are collected in \ref{a:prel}. Proof sketches of these properties can be found in \cite{Brenner-MiguelComteJohannes2020}, respectively \cite{Brenner-Miguel2021}. Let $h_1\in \IL^1(\IR,x^{c-1})$. Then, we define the Mellin transform of $h_1$ at the development point $c\in \IR$ as the function $\sM_c[h]:\IR\rightarrow \mathbb C$ with
\begin{align}
\sM_c[h_1](t):= \int_0^{\infty} x^{c-1+it} h_1(x)dx, \quad t\in \IR.
\end{align}
The key property of the Mellin transform, which makes it so appealing for the use of multiplicative deconvolution, is the so-called convolution theorem, that is, for $h_1, h_2\in \IL^1(\IR_+,x^{c-1})$,
\begin{align}\label{eq:conv}
\sM_c[h_1*h_2](t)=\sM_c[h_1](t) \sM_c[h_2](t), \quad t\in \IR.
\end{align}
Let us now revisit the definition of the spectral cut-off estimator.
\paragraph{Spectral-cut off estimator} The family of spectral cut-off estimator $(\widetilde f_k)_{k\in \IRp}$ proposed by \cite{Brenner-MiguelComteJohannes2020}, respectively \cite{Brenner-Miguel2021}, is based on the estimation of the Mellin transform of $f_Y$ and a spectral cut-off regularisation of the inverse Mellin transform.  Given the sample $(Y_j)_{j\in \nset{n}}$, where $\llbracket a\rrbracket:=[1, a] \cap \mathbb N$ for any $a\in \mathbb N$, an unbiased estimator of $\sM_c[f_Y](t), t\in \IR,$ is given by the empirical Mellin transform $$\widehat{\sM}_c(t):=n^{-1} \sum_{j\in \nset{n}} Y_j^{c-1+it}, \quad t\in \mathbb R$$ if $\E_{f_Y}(Y_1^{c-1})< \infty$ for $c\in \IR$. Exploiting the convolution theorem, eq. \eqref{eq:conv}, under the assumption that $\sM_c[g](t)\neq 0$ we can define the unbiased estimator $\widehat{\sM}_c(t)/\sM_c[g](t)$ of $\sM_c[f](t)$ for $t\in \IR.$ To construct an estimator of the unknown density $f$, the authors of \cite{Brenner-MiguelComteJohannes2020} used a spectral-cut off approach. That is, for $k\in \IRp$ we assume that $\mathds 1_{[-k,k]} \sM_c[g]^{-1} \in \IL^2(\IR)$, then we can ensure that $\mathds 1_{[-k,k]}\widehat\sM_c/\sM_c[g] \in \IL^1(\IR) \cap \IL^2(\IR)$ since $|\widehat\sM_c(t)|\leq \widehat \sM_c(0)< \infty$ almost surely. Now, the spectral cut-off density estimator $\widetilde f_k$ can be defined by
\begin{align}\label{eq:sce}
\widetilde f_k(x):=\sM_c^{-1}[\mathds 1_{[-k,k]} \widehat \sM_c/\sM_c[g]](x)= \frac{1}{2\pi} \int_{-k}^k x^{-c-it } \frac{\widehat \sM_c(t)}{\sM_c[g](t)} dt, \quad x\in \IRp.
\end{align}
Here we used two minor assumptions on the error density $g$, that is, 
\begin{align*}
\forall t\in \IR: \sM_c[g](t)\neq 0\quad \text{ and } \quad\forall k\in \IRp: \mathds 1_{[-k,k]}\sM_c[g]^{-1}\in \IL^2(\IR). \tag{\textbf{[G0]}}
\end{align*}
This assumption implies that the Mellin transform of $g$ does not approach zero too fast. Although this assumption is fulfilled for a large class of error densities, we will now show that one can define an estimator for an even weaker assumption on the error density. An intense study of this estimator, including the minimax optimality and data-driven choice of the parameter $k\in \IRp$, can be found in \cite{Brenner-MiguelComteJohannes2020}.
\paragraph{Ridge estimator} Inspired by the work of \cite{Meister2009} and \cite{HallMeister2007}, let $r, \xi \geq 0$ such that $t\mapsto\sM_c[g](t)^{r+1}(1+|t|)^{-\xi(r+2)}\in \IL^1(\IR)\cap \IL^2(\IR).$ Then for any $k\in \IRp$ we define the function $\mathrm R_{k, \xi, r}\in \IL^1(\IR) \cap \IL^2(\IR)$ by
\begin{align*}
\mathrm R_{k,\xi, r}(t):= \frac{\sM_c[g](-t) |\sM_c[g](t)|^r}{\max(|\sM_c[g](t)|, k^{-1}(1+|t|)^{\xi})^{r+2}}, \quad t\in \IR,
\end{align*}
and the set $G_k:=\{t\in \mathbb R: k^{-1}(1+|t|)^{\xi}> |\sM_c[g](t)|\}.$ Now for all $t\in G_n^c$ holds $\mathrm R_{k,r}(t)= \sM_c[g](t)^{-1}$. We define next the \textit{ridge density estimator} $\widehat f_{k, r}$ by $\widehat f_{k,r}:= \sM_c^{-1}[\widehat \sM_c \mathrm R_{k, r}]$. In fact, it can be written explicitly for $x\in \IRp$ as
\begin{align}\label{eq:ridge}
\widehat f_{k,\xi, r}(x)= \frac{1}{2\pi} \int_{-\infty}^{\infty} x^{-c-it} \widehat \sM_c(t) \mathrm R_{k,\xi, r}(t)dt 
= \frac{1}{2\pi} \int_{G_k^c} x^{-c-it} \frac{\widehat\sM_c(t)}{\sM_c[g](t)} dt+ \frac{1}{2\pi} \int_{G_k} x^{-c-it} \widehat\sM_c(t)\mathrm R_{k,\xi, r}(t) dt.
\end{align}
By the construction of $G_k$ the quotient $\widehat\sM_c(t)/\sM_c[g](t)$ in the integrand in eq. \eqref{eq:ridge} is well-defined even without assumption \textbf{[G0]}.

\section{Minimax theory\label{mt}}
In this section, we will see that an even milder assumption on the error density $g$ than \textbf{[G1]} is sufficient to ensure that the presented ridge estimator is consistent. We finish this Section \ref{mt}, by showing that the estimator is minimax optimal over the Mellin-Sobolev ellipsoids. We denote by  $\E_{f_Y}^n$  the expectation corresponding to the distribution of $(Y_j)_{j\in \llbracket n \rrbracket}$. Respectively we define $\E_{f_Y}:=\E_{f_Y}^1$ and $\E_g, \E_f$.  

\subsection{General consistency} Although the sequence $(G_k)_{k\in \mathbb N}$ is obviously nested, that is $G_{k+1}\subseteq G_{k}$ for all $k\in \mathbb N$, we want to stress out that the squared bias, $\|f-\E_{f_Y}^n(\widehat f_{k, \xi, r})\|_{x^{2c-1}}^2$ of $\widehat f_{k, \xi, r}$, defined in  eq. \eqref{eq:ridge}, might not tend to zero for $k$ going to infinity. For instance, one may consider the case where $\sM_c[g]$ vanishes on an open, nonempty set $A\subset \IR$ and $\sM_c[f]$ does not vanish on $A$. A more sophisticated discussion about identifiability and consistency in the context of additive deconvolution problems can be found in the work of \cite{Meister2009}. The discussion there can be directly transfered to the case of multiplicative deconvolution problems. Based on the discussion presented in \cite{Meister2009}, we will give a minimal assumption to ensure that we can define a consistent estimator using the ridge approach. We will from now on assume, that the Mellin transform of $g$ is almost nonzero everywhere, that is,
\begin{align*}
\lambda(\{t\in \IR:  \sM_c[g](t)=0 \})=0.  \tag{\textbf{[G-1]}} 
\end{align*}
Under the asumption \textbf{[G-1]} we can use the dominated convergence theorem to show that the bias $\|f- \E_{f_Y}^n(\widehat f_{k,\xi, r})\|_{x^{2c-1}}^2$ vanishes for $k$ going to infinity.
Further, it is worth stressing out that for $k\in \mathbb N$ and $t\in \mathbb R$ we have $\mathrm R_{k+1,\xi, r}(t) \geq \mathrm R_{k, \xi, r}(t)$.
We then get the following results whose proofs is postponed to  \ref{a:mt}.

\begin{proposition}\label{pro:rid:risk:bound}
	Let $c\in \IR$ such that  $f\in \IL^2(\IRp, x^{2c-1})$ and $\sigma_c:=\E_{f_Y}(Y_1^{2(c-1)})< \infty$. Then for any $r, \xi\geq 0$ with  $\sM_c[g]^{r+1}(1+|t|)^{-\xi(r+2)}\in \IL^1(\IR)\cap \IL^2(\IR)$ we have
	\begin{align*}
	\E_{f_Y}^n(\|f-\widehat f_{k,\xi, r}\|_{x^{2c-1}}^2) \leq \frac{1}{2\pi} \|\mathds 1_{G_k} \sM_c[f]\|_{\IR}^2+ \frac{\sigma_c}{2\pi n} \|\mathrm R_{k,\xi, r}\|_{\IR}^2
	\end{align*}
	where $G_k:=\{t\in \IR: k^{-1}(1+|t|)^{\xi}>|\sM_c[g](t)|\}$ and $\widehat f_{k,\xi, r}$ is defined in equation \eqref{eq:ridge}.\\
	If additionally \textbf{[G-1]} holds and $(k_n)_{n\in\mathbb N}$ satisfies $k_n\rightarrow \infty$ and $n^{-1} \|\mathrm R_{k_n,\xi, r}\|_{\IR}^2 \rightarrow 0$ for $n\rightarrow \infty$ then
	\begin{align*}
	\E_{f_Y}^n(\|\widehat f_{k_n, \xi, r}-f\|_{x^{2c-1}}^2) \rightarrow 0
	\end{align*}
	for $n\rightarrow \infty.$
\end{proposition}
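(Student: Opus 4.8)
The plan is to pass to the frequency side via the Plancherel-type isometry $\|h\|_{x^{2c-1}}^2=(2\pi)^{-1}\|\sM_c[h]\|_{\IR}^2$ for the Mellin transform (recorded in \ref{a:prel}), which turns the weighted $\IL^2(\IRp,x^{2c-1})$-risk into an integral over $\IR$. By construction $\sM_c[\widehat f_{k,\xi,r}]=\widehat\sM_c\,\mathrm R_{k,\xi,r}$, and since $\widehat\sM_c$ is unbiased for $\sM_c[f_Y]=\sM_c[f]\sM_c[g]$ by the convolution theorem \eqref{eq:conv}, the usual bias–variance split yields
\begin{align*}
&\E_{f_Y}^n(\|f-\widehat f_{k,\xi,r}\|_{x^{2c-1}}^2)\\
&\quad=\frac{1}{2\pi}\int_{\IR}|\sM_c[f](t)|^2\,|1-\sM_c[g](t)\mathrm R_{k,\xi,r}(t)|^2\,dt+\frac{1}{2\pi}\int_{\IR}\E_{f_Y}^n\big(|\widehat\sM_c(t)-\sM_c[f_Y](t)|^2\big)\,|\mathrm R_{k,\xi,r}(t)|^2\,dt,
\end{align*}
where I would justify the interchange of expectation and integration in the second term by Tonelli's theorem, the integrand being nonnegative and integrable because $\mathrm R_{k,\xi,r}\in\IL^2(\IR)$.

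The crux is the bias term, where I would compute $\sM_c[g]\mathrm R_{k,\xi,r}$ explicitly. Using that $g$ is real, so $\sM_c[g](-t)=\overline{\sM_c[g](t)}$, substitution into the definition gives $\sM_c[g](t)\mathrm R_{k,\xi,r}(t)=|\sM_c[g](t)|^{r+2}/\max(|\sM_c[g](t)|,k^{-1}(1+|t|)^\xi)^{r+2}$, which equals $1$ on $G_k^c$ and lies in $[0,1)$ on $G_k$. Hence $|1-\sM_c[g]\mathrm R_{k,\xi,r}|^2$ vanishes on $G_k^c$ and is at most $1$ on $G_k$, so the bias integral is bounded by $(2\pi)^{-1}\|\mathds 1_{G_k}\sM_c[f]\|_{\IR}^2$. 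This bound by the oracle bias is exactly the feature the ridge construction is designed to guarantee.

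For the variance term I would use that $\widehat\sM_c(t)$ is an average of i.i.d. summands $Y_j^{c-1+it}$, so $\E_{f_Y}^n(|\widehat\sM_c(t)-\sM_c[f_Y](t)|^2)=n^{-1}\Var_{f_Y}(Y_1^{c-1+it})\le n^{-1}\E_{f_Y}(|Y_1^{c-1+it}|^2)=n^{-1}\E_{f_Y}(Y_1^{2(c-1)})=\sigma_c/n$, since $|Y_1^{it}|=1$. Inserting this pointwise bound gives the variance estimate $\sigma_c(2\pi n)^{-1}\|\mathrm R_{k,\xi,r}\|_{\IR}^2$, and combining the two pieces completes the risk inequality.

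For the consistency claim the variance part tends to zero directly from the hypothesis $n^{-1}\|\mathrm R_{k_n,\xi,r}\|_{\IR}^2\to 0$. For the bias I would argue by dominated convergence: for each fixed $t$ with $\sM_c[g](t)\neq 0$ one has $k_n^{-1}(1+|t|)^\xi\to 0<|\sM_c[g](t)|$, hence $\mathds 1_{G_{k_n}}(t)\to 0$; under \textbf{[G-1]} this holds for $\lambda$-almost every $t$, and since $\mathds 1_{G_{k_n}}|\sM_c[f]|^2\le|\sM_c[f]|^2\in\IL^1(\IR)$ (because $f\in\IL^2(\IRp,x^{2c-1})$), dominated convergence forces $\|\mathds 1_{G_{k_n}}\sM_c[f]\|_{\IR}^2\to 0$. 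I expect the only genuinely delicate points to be the Tonelli interchange and the legitimacy of applying Mellin inversion to $\widehat\sM_c\,\mathrm R_{k,\xi,r}$; both follow from $\mathrm R_{k,\xi,r}\in\IL^1(\IR)\cap\IL^2(\IR)$ together with the almost-sure bound $|\widehat\sM_c(t)|\le\widehat\sM_c(0)$, while the algebraic identity for $\sM_c[g]\mathrm R_{k,\xi,r}$ and the variance estimate are elementary.
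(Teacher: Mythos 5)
Your proposal is correct and follows essentially the same route as the paper: Plancherel isometry plus a bias--variance split, the observation that $\sM_c[g]\mathrm R_{k,\xi,r}$ equals $1$ off $G_k$ so the bias integral localises to $G_k$, the pointwise variance bound $\Var_{f_Y}(Y_1^{c-1+it})\le\sigma_c$, and dominated convergence under \textbf{[G-1]} for consistency. If anything, your explicit computation showing $\sM_c[g](t)\mathrm R_{k,\xi,r}(t)=|\sM_c[g](t)|^{r+2}/\max(|\sM_c[g](t)|,k^{-1}(1+|t|)^{\xi})^{r+2}\in[0,1]$ is slightly more careful than the paper's proof, which only records $|\mathrm R_{k,\xi,r}|\le|\sM_c[g]|^{-1}$ (by itself that would give the weaker factor $|1-\sM_c[g]\mathrm R_{k,\xi,r}|\le 2$, i.e.\ an extra constant $4$ in the bias bound), so your version actually justifies the stated constant.
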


Although the assumptions on $\xi, k_n, r\geq 0$ in Proposition \ref{pro:rid:risk:bound} seem to be rather technical, we will see that they are fulfilled when considering more precise classes of error densities, so-called \textit{smooth error densities}. Before we define this family of error densities let us shorty comment on the consistency of the presented estimator.
\begin{remark}[Strong consistency]\label{rem:st:con}
	In Proposition \ref{pro:rid:risk:bound} we have seen that we can determine a set of assumptions which ensures by application of the Markov inequality,  that $\|\widehat f_{k_n,\xi,r }-f\|_{x^{2c-1}}^2 \rightarrow 0$ in probability. Here, we needed the additional assumption that $f\in \IL^2(\IR, x^{2c-1})$ and $\sigma_c=\E_{f_Y}(Y_1^{2(c-1)})<\infty$ to construct the estimator and show its properties. A less restrictive metric which can be considered would be the $\IL^1(\IRp, x^0)$-metric, since for any density, $f\in \IL^1(\IRp, x^{0})$ holds. Further, the Mellin transform developed in $c=1$ is well-defined for any density $f$. In the book of \cite{Meister2009} they proposed an estimator $\widehat f_V$ of the density $f_V:\IR \rightarrow \IR$ of a real random variable $V$ given i.i.d. copies of $Z=V+\varepsilon$ where $V$ and $\varepsilon$ are stochastically independent. They were able to show that their estimator $\widehat f$ is strongly consistent in the $\IL^1(\IR)$-sense, that is, $\|\widehat f_V- f_V\|_{\IL^1(\IR)} \rightarrow 0$ almost surely. Given the $\log$ transformed data, $\log(Y)= \log(X)+\log(\varepsilon)$, we can use the estimator $\widehat f_V$ for $V=\log(X)$ and deduce the estimator $\widehat f_X(x):= \widehat f_V(\log(x))x^{-1}$ for any $x\in \IRp.$ Then $\|\widehat f_X-f\|_{\IL^1(\IRp, x^0)}= \|\widehat f_V-f_V\|_{\IL^1 (\IR)}$, implying that the estimator $\widehat f_X$ is strongly consistent in the $\IL^1(\IRp, x^0)$. Although it might be tempting generalise this result for the $\IL^1(\IRp, x^{c-1})$-distance for any $c\in \IR$, it would need an additional moment assumption on $f$ which contradicts the idea of considering the most general case.
\end{remark}

\subsection{Noise assumption} Up to now, we have only assumed that the Mellin transform of the error density $g$ does not vanish almost everywhere, i.e. \textbf{[G-1]}. To develop the minimax theory for the estimator $\widehat f_{k,\xi, r}$ we will specify the class of considered error density $g$ through an assumption on the decay of its corresponding Mellin transform $\sM_c[g]$. This assumption will allow us to determine the growth of the variance term more precisely. In the context of additive deconvolution problems, compare \cite{Fan1991}, densities whose Fourier transform decay polynomially are called \highlight{smooth error densities}. To stay in this way of speaking we say that an error density $g$ is a \highlight{smooth error density} if there exists $c_g,C_g, \gamma \in \IRp$ such that
\begin{align*}
\tag{\textbf{[G1]}}  c_g  (1+t^2)^{-\gamma/2} \leq |\sM_c[g](t)| \leq C_g (1+t^2)^{-\gamma/2},\quad  t\in \mathbb R.
\end{align*} 
This assumption on the error density  was also considered in the works of \cite{BelomestnyGoldenshluger2020}, \cite{Brenner-MiguelComteJohannes2020} and \cite{Brenner-Miguel2021}.
We focus on to the case where $\xi=0$, and use the abreviation $\widehat f_{k}:= \widehat f_{k, 0, r}$, respectively $\mathrm R_{k}:= \mathrm R_{k, 0, r}$. Then under the asummption of Proposition \ref{pro:rid:risk:bound} and assumption \textbf{[G1]} we can show that for each  $r>0\vee (\gamma^{-1}-1)$ there exists a constant $C_{g,r}>0$ such that $n^{-1} \|\mathrm R_{k}\|_{\IR}^2\leq C_{g,r} k^{2+ \gamma^{-1}}n^{-1}$, which leads to the following corollary whose proof can be found in \ref{a:mt}. Here $a\vee b:=\max(a,b)$ for $a,b\in \IR.$

\begin{corollary}\label{cor:rid:consis:ex}
	Let the assumptions of Proposition \ref{pro:rid:risk:bound} and \textbf{[G1]} be fulfilled. Then for $r>0\vee(\gamma^{-1}-1)$, 
	\begin{align*}
	\E_{f_Y}^n(\|f-\widehat f_{k}\|_{x^{2c-1}}^2 ) \leq \frac{1}{2\pi} \|\mathds 1_{G_k} \sM_c[f]\|_{\IR}^2 + C_{g,r} \frac{\sigma_ck^{2+\gamma^{-1}}}{n} .
	\end{align*}
	If one chooses $(k_n)_{n\in \mathbb N}$ such that $k_n^{2+\gamma^{-1}} n^{-1} \rightarrow 0$ and $k_n \rightarrow \infty$  then $\E_{f_Y}^n(\|f-\widehat f_{k_n}\|_{x^{2c-1}}^2 ) \rightarrow 0$ for $n\rightarrow \infty.$
	\end{corollary}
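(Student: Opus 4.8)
The plan is to read the bound off Proposition~\ref{pro:rid:risk:bound} specialised to $\xi=0$ and to reduce everything to one variance estimate. Since Proposition~\ref{pro:rid:risk:bound} already gives
\[
\E_{f_Y}^n(\|f-\widehat f_{k}\|_{x^{2c-1}}^2) \leq \frac{1}{2\pi}\|\mathds 1_{G_k}\sM_c[f]\|_{\IR}^2 + \frac{\sigma_c}{2\pi n}\|\mathrm R_{k}\|_{\IR}^2,
\]
and the first summand is exactly the bias term displayed in the corollary, it suffices to show that $\|\mathrm R_{k}\|_{\IR}^2$ is bounded by a constant depending only on $g$ and $r$ times $k^{2+\gamma^{-1}}$. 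The asserted consistency then follows by checking the hypotheses of the second part of Proposition~\ref{pro:rid:risk:bound}.

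To bound the variance, I would first simplify $\mathrm R_{k}=\mathrm R_{k,0,r}$. As $g$ is a real density, $\sM_c[g](-t)=\overline{\sM_c[g](t)}$, so $|\mathrm R_{k}(t)|=|\sM_c[g](t)|^{r+1}/\max(|\sM_c[g](t)|,k^{-1})^{r+2}$. Splitting $\IR$ along the definition of $G_k=\{t\in\IR: k^{-1}>|\sM_c[g](t)|\}$ gives $|\mathrm R_{k}(t)|=|\sM_c[g](t)|^{-1}$ on $G_k^c$ and $|\mathrm R_{k}(t)|=k^{r+2}|\sM_c[g](t)|^{r+1}$ on $G_k$. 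Next I would use \textbf{[G1]} to localise both regions: on $G_k^c$ the upper bound forces $(1+t^2)^{\gamma/2}\leq C_g k$, whence $G_k^c\subseteq[-T_k,T_k]$ with $T_k$ of order $k^{1/\gamma}$; symmetrically the lower bound shows $G_k\subseteq\{|t|\geq S_k\}$ with $S_k$ of order $k^{1/\gamma}$.

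With these inclusions the two contributions are estimated separately. On $G_k^c$ one has $\int_{G_k^c}|\sM_c[g](t)|^{-2}\,dt\leq c_g^{-2}\int_{-T_k}^{T_k}(1+t^2)^{\gamma}\,dt$, which is of order $T_k^{2\gamma+1}$, i.e.\ of order $k^{2+\gamma^{-1}}$. On $G_k$ one has $\int_{G_k}k^{2(r+2)}|\sM_c[g](t)|^{2(r+1)}\,dt\leq C_g^{2(r+1)}k^{2(r+2)}\int_{|t|\geq S_k}(1+t^2)^{-\gamma(r+1)}\,dt$; here the threshold $r>\gamma^{-1}-1$ (equivalently $\gamma(r+1)>1$) renders the tail integral finite and of order $S_k^{1-2\gamma(r+1)}$, so that inserting $S_k$ of order $k^{1/\gamma}$ makes the exponent collapse to $2(r+2)-2(r+1)+\gamma^{-1}=2+\gamma^{-1}$ as well. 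Adding the two bounds yields the claimed variance estimate.

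I expect the main obstacle to be the bookkeeping on $G_k$: one must track the power $k^{2(r+2)}$ coming from the cap $k^{-1}$, the polynomial decay $|\sM_c[g]|^{2(r+1)}$ from \textbf{[G1]}, and the cut-off $S_k$ simultaneously, and verify that the competing powers of $k$ cancel to leave precisely $k^{2+\gamma^{-1}}$; this is where the threshold $r>0\vee(\gamma^{-1}-1)$ is used, both to keep the tail integral convergent (already needed to apply Proposition~\ref{pro:rid:risk:bound}) and to produce the stated rate. For the concluding consistency assertion I would observe that \textbf{[G1]} implies \textbf{[G-1]}, since its lower bound keeps $\sM_c[g]$ nonvanishing; then $k_n\to\infty$ together with $k_n^{2+\gamma^{-1}}n^{-1}\to 0$ gives $n^{-1}\|\mathrm R_{k_n}\|_{\IR}^2\to 0$, so the hypotheses of the second part of Proposition~\ref{pro:rid:risk:bound} hold and the risk tends to zero.
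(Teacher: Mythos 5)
Your proposal is correct and follows essentially the same route as the paper's proof: the same reduction to showing $\|\mathrm R_{k}\|_{\IR}^2\leq C_{g,r}k^{2+\gamma^{-1}}$, the same split of the integral along $G_k$ and $G_k^c$, the same sandwiching of $G_k$ between complements of intervals of radius of order $k^{1/\gamma}$ via the two inequalities in \textbf{[G1]}, and the same exponent arithmetic using $\gamma(r+1)>1$. Your explicit verification of the consistency step (that \textbf{[G-1]} follows from the lower bound in \textbf{[G1]} and that the second part of Proposition~\ref{pro:rid:risk:bound} applies) is a detail the paper leaves implicit, but it matches the intended argument.
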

Considering the bound of the variance term, a  choice of $(k_n)_{n\in \mathbb N}$ increasing slowly in $n$, would imply a faster decay of the variance term. On the other hand, the opposite effect on the bias term can be observed. In fact, to balance both terms,  an assumption on the decay of the Mellin transform of the unknown density $f$ is needed. In the non-parametric Statistics and in the inverse problem community this is usually done by considering regularity spaces.
\subsection{The Mellin-Sobolev space} We will now introduce the so-called \highlight{Mellin-Sobolev} spaces, which are, for instance,  considered by \cite{Brenner-MiguelComteJohannes2020} for the case $c=1$ and \cite{Brenner-Miguel2021} for the multivariate case. In the work of \cite{Brenner-MiguelComteJohannes2020} their connection to regularity properties, in terms of analytical properties, and their connection to the Fourier-Sobolev spaces are intensely studied.
For $s, L\in \IRp$ and $c\in \IR$ we define the \highlight{Mellin-Sobolev spaces} by
\begin{align*}
\mathbb W^{s}_c(\IRp):=\{ f\in\IL^2(\IRp, x^{2c-1}): |f|_{s,c}^2:= \|(1+t^2)^{s/2} \sM_c[f]\|_{\IR}^2 \leq \infty\}
\end{align*} and their corresponding ellipsoids by $\mathbb W^s_c(L):=\{f\in \mathbb W^s_c(\IRp): |f|_{s,c}^2 \leq L\}$. Then for $f\in \mathbb W^s_c(L)$ and under assumption \textbf{[G1]} we can show that  $\|\mathds 1_{G_k} \sM_c[f]\|_{\IR}^2 \leq C(g, L, s) k^{-2s/\gamma}$.
Since $f$ is a density and to control the variance term, it is natural to consider the following subset of $\mathbb W^s_c(L),$
\begin{align*}
\mathbb D^s_c(L):=\{ f\in \mathbb W_c^s(L): f \text{ is a density }, \E_f(X_1^{2(c-1)})\leq L\}.
\end{align*}
Then we can state the following theorem whose proof is postponed to \ref{a:mt}.
\begin{theorem}[Upper bound over $\mathbb D_c^s(L)$]\label{thm:upp:bound}
	Let $c\in \IR$, $s,L\in \IRp$ and  $\E_{g}(U_1^{2(c-1)})< \infty$. Let further \textbf{[G1]} be fulfilled and $r>0\vee(\gamma^{-1}-1)$. Then the choice $k_o:=n^{\gamma/(2s+2\gamma+1)}$ leads to
	\begin{align*}
	\sup_{f\in \mathbb D_c^s(L)} \E^n_{f_Y}(\|f-\widehat f_{k_o}\|_{x^{2c-1}}^2) \leq C_{g,  L, r} n^{-2s/(2s+2\gamma+1)}.
	\end{align*}
\end{theorem}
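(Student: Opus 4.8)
The plan is to combine the nonasymptotic risk bound from Corollary~\ref{cor:rid:consis:ex} with the bias estimate over the Mellin-Sobolev ellipsoid that the excerpt has already recorded, and then to optimise the resulting deterministic bound in the ridge parameter $k$. Concretely, since we work with $\xi=0$ and write $\widehat f_k=\widehat f_{k,0,r}$, Corollary~\ref{cor:rid:consis:ex} gives, for every $f\in\mathbb D_c^s(L)\subseteq\mathbb W^s_c(L)$ and every $r>0\vee(\gamma^{-1}-1)$,
\begin{align*}
\E^n_{f_Y}(\|f-\widehat f_{k}\|_{x^{2c-1}}^2)\leq \frac{1}{2\pi}\|\mathds 1_{G_k}\sM_c[f]\|_{\IR}^2+C_{g,r}\frac{\sigma_c k^{2+\gamma^{-1}}}{n}.
\end{align*}
First I would control the two terms separately. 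For the bias term, the excerpt states that under \textbf{[G1]} and for $f\in\mathbb W^s_c(L)$ one has $\|\mathds 1_{G_k}\sM_c[f]\|_{\IR}^2\leq C(g,L,s)\,k^{-2s/\gamma}$; I would invoke this directly, so that the bias is of order $k^{-2s/\gamma}$ uniformly over the class. For the variance term, the only quantity that is not yet uniformly bounded is $\sigma_c=\E_{f_Y}(Y_1^{2(c-1)})$, so the key preparatory step is to bound $\sigma_c$ uniformly over $\mathbb D_c^s(L)$.

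The uniform control of $\sigma_c$ is where I expect the only genuine (if modest) work to lie. Using $Y_1=X_1U_1$ with $X_1$ and $U_1$ independent, the convolution structure gives $\sigma_c=\E_{f_Y}(Y_1^{2(c-1)})=\E_f(X_1^{2(c-1)})\,\E_g(U_1^{2(c-1)})$. By the definition of the class $\mathbb D_c^s(L)$ we have $\E_f(X_1^{2(c-1)})\leq L$, and the hypothesis of the theorem supplies $\E_g(U_1^{2(c-1)})<\infty$; hence $\sigma_c\leq L\,\E_g(U_1^{2(c-1)})=:C_{g,L}$ uniformly over $f\in\mathbb D_c^s(L)$. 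I would spell out this factorisation carefully, since it is exactly the point at which the density-and-moment constraints built into $\mathbb D_c^s(L)$ are used, and it is what turns the pointwise variance bound into a uniform one.

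With both pieces uniform over the class, the bound reads
\begin{align*}
\sup_{f\in\mathbb D_c^s(L)}\E^n_{f_Y}(\|f-\widehat f_{k}\|_{x^{2c-1}}^2)\leq C(g,L,s)\,k^{-2s/\gamma}+C_{g,r}\,C_{g,L}\,\frac{k^{2+\gamma^{-1}}}{n}.
\end{align*}
Finally I would balance the two terms by equating the exponents of $k$: setting $k^{-2s/\gamma}\asymp k^{2+\gamma^{-1}}n^{-1}$ yields $k^{2s/\gamma+2+\gamma^{-1}}\asymp n$, i.e.\ $k\asymp n^{\gamma/(2s+2\gamma+1)}$, which is precisely the stated choice $k_o=n^{\gamma/(2s+2\gamma+1)}$. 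Substituting $k_o$ back into either term gives the common rate $n^{-2s/(2s+2\gamma+1)}$, and collecting all constants into a single $C_{g,L,r}$ (independent of $n$ and of $f\in\mathbb D_c^s(L)$) completes the proof. The main obstacle is conceptual rather than computational: one must verify that the bias bound and the moment bound genuinely hold uniformly over the ellipsoid, after which the rate optimisation is a one-line exponent-matching calculation.
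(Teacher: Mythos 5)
Your proposal is correct and follows essentially the same route as the paper's own proof: the paper likewise combines the risk bound of Corollary~\ref{cor:rid:consis:ex} with the bias estimate $\|\mathds 1_{G_k}\sM_c[f]\|_{\IR}^2\leq C(g,L)k^{-2s/\gamma}$ (obtained via the inclusion $G_k\subseteq G_{k,2}$ from the proof of that corollary), uses the factorisation $\sigma_c=\E_f(X_1^{2(c-1)})\,\E_g(U_1^{2(c-1)})\leq L\,\E_g(U_1^{2(c-1)})$ to make the variance bound uniform over $\mathbb D_c^s(L)$, and balances the two terms with $k_o=n^{\gamma/(2s+2\gamma+1)}$. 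Your exponent-matching calculation and the identification of the moment factorisation as the only place where the defining constraints of $\mathbb D_c^s(L)$ enter are exactly the paper's argument, so there is nothing to add.
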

A similar result was presented by the authors \cite{Brenner-MiguelComteJohannes2020} and \cite{Brenner-Miguel2021} showing that for the spectral cut-off estimator $\widetilde f_{k_o}$ the choice $k_o=n^{1/(2s+2\gamma+1)}$ leads to the same rate of $n^{-2s/(2s+\gamma+1)}$ uniformly over the classes $\mathbb D_c^s(L)$.
For the case $c=1$ the authors of \cite{Brenner-MiguelComteJohannes2020}  presented a lower bound result, showing that in many cases the rate given in Theorem \ref{thm:upp:bound} is the minimax rate for the density estimation $f$ given the i.i.d. sample $(Y_j)_{j\in \llbracket n \rrbracket}$. For the multivariate case, the author in \cite{Brenner-Miguel2021} has generalised the proof for all $c>0$. The following Theorem follows as a special case of the lower bound presented in \cite{Brenner-Miguel2021} for the dimension $d=1$ and its proof is thus omitted.
\begin{theorem}[Lower bound over $\mathbb D_c^s(L)$]\label{thm:low:bound}
	Let $s, \gamma \in \mathbb N$, $c>0$ and assume that \textbf{[G1]} holds. Additionally, assume that $g(x)=0$ for $x>1$ and that there exists constants $c_g, C_g$ such that
	\begin{align*}
		c_g(1+t^2)^{-\gamma/2}\leq|\sM_{\widetilde c}[g](t)| \leq C_g (1+t^2)^{-\gamma/2}, \quad t\in \IR,
	\end{align*}
	where $\widetilde c=1/2$ for $c> 1/2$ and $\widetilde c=0$ for $c\in (0,1/2]$. \\
	Then there exist constants $C_{g,c}, L_{s, g, c}>0$ such that for all $L\geq L_{s,g,c}, n\in \mathbb N$ and for any estimator $\widehat f$ of $f$ based on an i.i.d. sample $(Y_j)_{j\in \nset{n}}$,
	\begin{align*}
	\sup_{f\in \mathbb D_c^s(L)} \E_{f_Y}^n(\|\widehat f-f\|_{x^{2c-1}}^2) \geq C_{g,c} n^{-2s/(2s+2\gamma+1)}.
	\end{align*}
	\end{theorem}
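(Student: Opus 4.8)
The statement is a minimax lower bound, and the natural route is a reduction to a hypercube testing problem via Assouad's lemma, carried out entirely in the Mellin domain so that the decay assumptions on $g$ can be used directly. The plan is to fix a sufficiently smooth reference density $f_0$ lying in the interior of $\mathbb D_c^s(L)$ and bounded below on a compact part of its support, and to perturb it by a family of bumps concentrated at a single Mellin-frequency scale $m$. Concretely, I would take a fixed smooth, real-valued $\psi$ whose Mellin transform is supported in a narrow window around the origin, and for $j\in\nset{M}$ with $M\asymp m$ modulate it to frequency $t_j\asymp m$ (for instance via $x\mapsto\psi(x)\cos(t_j\log x)$ after rescaling), producing real bumps $\psi_1,\dots,\psi_M$ whose Mellin transforms have pairwise disjoint supports concentrated near $\pm t_j$. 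Setting $f_\tau:=f_0+\delta\sum_{j\in\nset{M}}\tau_j\psi_j$ for $\tau\in\{0,1\}^M$ and an amplitude $\delta>0$ to be calibrated, the crucial simplification is that because the perturbations sit at nonzero Mellin frequency $\asymp m$, every mass and moment functional — being a Mellin transform evaluated at $t=0$ — vanishes automatically, so $\int_0^\infty\psi_j=0$ and each $f_\tau$ remains a probability density, with nonnegativity guaranteed once $\delta$ is small relative to the lower bound on $f_0$.

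Next I would check membership in $\mathbb D_c^s(L)$ and extract the separation. By the Mellin--Plancherel identity the seminorm of the perturbation is $|f_\tau-f_0|_{s,c}^2\asymp\delta^2\sum_j\|(1+t^2)^{s/2}\sM_c[\psi_j]\|_{\IR}^2\asymp\delta^2Mm^{2s}\asymp\delta^2m^{2s+1}$, since the $\sM_c[\psi_j]$ live at frequency $\asymp m$; requiring this below a fixed fraction of $L$ gives the first constraint $\delta^2m^{2s+1}\lesssim L$, while the moment bound $\E_f(X_1^{2(c-1)})\le L$ follows from the vanishing-moment normalisation and smallness of $\delta$. Separation in the loss is then immediate from disjoint supports and Plancherel: for $\tau\neq\tau'$ one gets $\|f_\tau-f_{\tau'}\|_{x^{2c-1}}^2\asymp\delta^2\rho(\tau,\tau')$, with $\rho$ the Hamming distance, i.e.\ a per-coordinate separation $\asymp\delta^2$ as Assouad's lemma requires.

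The analytic heart is bounding the distinguishability of the observed laws. Since $\sM_c[f_{Y,\tau}]=\sM_c[f_\tau]\sM_c[g]$, a single-coordinate difference $f_{Y,\tau}-f_{Y,\tau'}$ has Mellin transform $\delta\,\sM_c[\psi_j]\sM_c[g]$ supported at frequency $\asymp m$, where $|\sM_c[g]|\asymp m^{-\gamma}$ by \textbf{[G1]}. Writing the per-observation divergence as $\chi^2=\int(f_{Y,\tau}-f_{Y,0})^2/f_{Y,0}\,dx$ and bounding $f_{Y,0}$ below by a power of $x$ on its support, one controls it by a weighted $L^2$-norm; it is here that $g(x)=0$ for $x>1$ and the second, $\widetilde c$-indexed bound on $\sM_{\widetilde c}[g]$ enter, permitting the weight $x^{2\widetilde c-1}$ and the Mellin--Plancherel identity at the development point $\widetilde c$, so that $\chi^2\lesssim\|\sM_{\widetilde c}[f_{Y,\tau}-f_{Y,0}]\|_{\IR}^2\lesssim\delta^2m^{-2\gamma}$. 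Tensorising and linearising yields $\mathrm{KL}\big(f_{Y,\tau}^{\otimes n},f_{Y,\tau'}^{\otimes n}\big)\lesssim n\delta^2m^{-2\gamma}$ for neighbouring $\tau,\tau'$, and keeping this bounded gives the second constraint $n\delta^2m^{-2\gamma}\lesssim1$. Assouad's lemma then delivers $\inf_{\widehat f}\sup_\tau\E_{f_{Y,\tau}}^n\|\widehat f-f_\tau\|_{x^{2c-1}}^2\gtrsim M\delta^2\asymp m\delta^2$, and optimising $m\delta^2$ under $\delta^2\lesssim Lm^{-(2s+1)}$ and $\delta^2\lesssim n^{-1}m^{2\gamma}$ by equating them gives $m\asymp(Ln)^{1/(2s+2\gamma+1)}$ and hence the announced rate $m\delta^2\asymp n^{-2s/(2s+2\gamma+1)}$, valid for $L$ above a threshold $L_{s,g,c}$ that makes $f_0$ and all bumps fit inside the ellipsoid.

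The step I expect to be the main obstacle is the reconciliation of the two development points: the loss, the Sobolev ball and the density normalisation are all controlled at $c$, whereas the lower bound on $f_{Y,0}$ and the $\sM[g]$-decay that drive the $\chi^2$-estimate are most naturally expressed at $\widetilde c\in\{0,1/2\}$. Transferring the weighted Plancherel identity cleanly between $c$ and $\widetilde c$ — so that one single bump construction simultaneously respects the $c$-Sobolev constraint, the $c$-moment bound, and the $\widetilde c$-divergence estimate — is exactly where $g(x)=0$ for $x>1$ and the $\widetilde c$-indexed decay of $\sM_{\widetilde c}[g]$ are genuinely needed, and it is the part demanding the most care; the remaining ingredients (disjoint-support Plancherel, the linearised $\chi^2$ tensorisation, and the parameter balance) are routine. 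This is precisely the computation performed in full generality for arbitrary dimension in \cite{Brenner-Miguel2021}, from which the present one-dimensional statement follows as the case $d=1$.
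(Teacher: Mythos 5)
You should first note that the paper contains no proof of this theorem at all: it is obtained verbatim as the $d=1$ special case of the lower bound established in \cite{Brenner-Miguel2021}, and your closing reduction to that citation is exactly the paper's own treatment. Your sketched reconstruction also matches the architecture of the cited proof at the top level: an Assouad-type hypercube of perturbed densities, a $\chi^2$/KL bound computed via the Mellin--Plancherel identity at the auxiliary development point $\widetilde c$ (which is where $g(x)=0$ for $x>1$ and the $\sM_{\widetilde c}[g]$-decay are used, exactly as you say), and the calibration $\delta^2\asymp n^{-1}m^{2\gamma}$, $\delta^2\asymp Lm^{-(2s+1)}$, $m\asymp n^{\gamma/(2s+2\gamma+1)}$ yielding the rate.

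However, the bump construction you propose contains a genuine gap for $c\neq 1$. The mass of a perturbation is $\sM_1[\psi_j](0)$ and the class moment $\E_f(X_1^{2(c-1)})$ involves $\sM_{2c-1}[\psi_j](0)$ --- evaluations on the vertical lines $\mathrm{Re}=1$ and $\mathrm{Re}=2c-1$ of the Mellin plane --- whereas you localise $\sM_c[\psi_j]$ on the line $\mathrm{Re}=c$. These functionals vanish ``automatically'' only when the lines coincide, i.e. for $c=1$ (the case treated in \cite{Brenner-MiguelComteJohannes2020}); in general nothing cancels. The problem is worse than a missing cancellation: if $\sM_c[\psi_j]$ is exactly compactly supported, then $u\mapsto e^{cu}\psi_j(e^u)$ is band-limited and hence cannot decay exponentially (exponential decay would make its Fourier transform analytic in a strip, contradicting compact support), so the integrals $\int_0^\infty\psi_j(x)\,dx=\int_{\IR}e^{(1-c)u}\,e^{cu}\psi_j(e^u)\,du$ and $\int_0^\infty x^{2(c-1)}\psi_j(x)\,dx$ need not converge at all, and your $f_\tau$ may fail to belong to $\mathbb D_c^s(L)$ for any $L$; conversely, if you instead take spatially compactly supported bumps such as $\psi(x)\cos(t_j\log x)$, their Mellin transforms are entire and the exact disjoint-support Plancherel separation and exact vanishing you invoke both fail and must be replaced by quantitative near-orthogonality plus \emph{explicitly imposed} vanishing integrals. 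A correct construction must control all the relevant Mellin lines ($c$ for the loss and the ellipsoid, $1$ and $2c-1$ for the class, $\widetilde c$ for the divergence) simultaneously, e.g. via compactly supported bumps with built-in vanishing moments --- and this is precisely where the hypothesis $s,\gamma\in\mathbb N$, which your sketch never uses (a telltale sign), enters the actual argument: integer $\gamma$ allows one to impose enough vanishing moments for the bump transform to absorb the full $(1+t^2)^{-\gamma}$ weight in the $\chi^2$ bound near $t=0$. Once the construction is repaired along these lines, the remainder of your calibration goes through as described.
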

We want to emphasize that the additional assumption on the error densities are for technical reasons. To ensure that $\sM_{\widetilde c}[g]$ is well-defined, we need to addtionally assume that $\E_g(U_1^{-1/2})< \infty$ for the case of $c>1/2$. If $c\in (0,1/2],$ then $\E_g(U_1^{-1})<\infty$ follows from $\E_g(U_1^{2c-2})$, compare Proposition \ref{pro:rid:risk:bound}. \\
In the work of \cite{Brenner-MiguelComteJohannes2020} the authors showed that for the case of $c=1$ the spectral cut-off estimator $\widetilde f_k$, defined in eq. \eqref{eq:sce} is minimax optimal for some examples of error densities. In fact, they stressed out that for Beta-distributed $U_1$, considered for instance by \cite{BelomestnyComteGenon-Catalot2016}, all assumption on $g$ are fulfilled. 

\section{Data-driven method\label{dd}}
In Section \ref{mt} we determined a choice of the parameter $\xi, k, r\geq 0$ such that the resulting ridge estimator $\widehat f_{k,\xi, r}$ is consistent, see Corollary \ref{cor:rid:consis:ex}. Setting $\xi=0$ we additionally found a choice of the parameter $k\in \IRp$ which makes the estimator minimax optimal over the Mellin-Sobolev ellipsoids $\mathbb D_c^s(L)$, compare Theorem \ref{thm:upp:bound}. We want to emphasize that the latter choice of $k\in \IRp$ might not be explicitly dependent on the exact unknown density $f$ but is still dependent on its regularity parameter $s\in \IRp$ which again  is unknown. \\
We will now present a data-driven version of the estimator $\widehat f_{k,r}$ only dependent on the sample $(Y_j)_{j\in \llbracket n \rrbracket}$. 
For the data-driven choice of $k\in \IRp$ we will use a version of the Goldenshluger-Lepski method. That is, we will define the random functions $\widehat A, \widehat V: \IRp\rightarrow \IRp$ for $k\in \IRp$ by
\begin{align*}
 \widehat A(k):= \sup_{k'\in\mathcal K_n} (\|\widehat f_{k'}- \widehat f_{k'\wedge k}\|_{x^{2c-1}}^2- \chi_1\widehat V(k))_+ \text{ and } \widehat V(k):= 2\widehat \sigma_c \|\mathrm R_k\|_{\IR}^2n^{-1}
\end{align*}
for $ \chi_1\in \mathbb R_+$ and $\mathcal K_n:=\{k\in \mathbb N: \|\mathrm R_k\|_{\IR}^2\leq n\}$. Here $a\wedge b:=\min(a,b)$ and $a_+=\max(a, 0)$ for any real numbers $a,b\in \IR$. 
Generally, the random function $\widehat V$ is an empiricial version of   $V(k):=\sigma_c\|\mathrm R_{k}\|_{\IR}^2n^{-1}$ which mimics the behaviour of the variance term, compare Proposition \ref{pro:rid:risk:bound}. Analogously, $\widehat A$ is an empirical version of $A(k):=\sup_{k'\in\mathcal K_n} (\|\widehat f_{k'}- \widehat f_{k'\wedge k}\|_{x^{2c-1}}^2- \chi_1V(k))_+$ which behaves like the bias term. 
For $\chi_2 \geq \chi_1$ we then set \begin{align}\label{gde:mme:lm}
\widehat k:= \argmin_{k\in \mathcal K_n} \widehat A(k)+ \chi_2\widehat V(k).
\end{align}
Then we can show the following result where we denote by $\|h\|_{\infty}$ the essential supremum of a measurable function $h:\IR\rightarrow \mathbb C$ and $\|h\|_{\infty,x^{2c-1}}$ the essential supremum of $x\mapsto x^{2c-1}h(x)$. 
\begin{theorem}\label{thm:dd:ridge}
	Let $c\in \IR$ and $f\in \IL^2(\IRp, x^{2c-1})$. Assume that $\E_{f_Y}(Y_1^{5(c-1)})< \infty$, $\|g \|_{\infty, x^{2c-1}}<\infty$ and \textbf{[G1]} is fulfilled. Then for $\chi_2 \geq \chi_1 \geq 72$,
	\begin{align*}
	\E_{f_Y}^n(\|\widehat f_{\widehat k}- f\|_{x^{2c-1}}^2) \leq C_1 \inf_{k\in \mathcal K_n} \left( \|\mathds 1_{G_k} \sM_c[f]\|_{\IR}^2 + V(k) \right) + \frac{C_2}{n} 
	\end{align*}
	where $C_1$ is a positive constant depending on $\chi_2, \chi_1$ and  $C_2$ is a positive constant depending on $\E_{f_Y}(Y_1^{5(c-1)}), \|g\|_{\infty, x^{2c-1}}$, $g$ and $r$.
\end{theorem}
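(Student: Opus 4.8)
The plan is to run the Goldenshluger--Lepski (GL) programme, separating a purely deterministic comparison of $\widehat k$ with its competitors from a single concentration argument for the stochastic part.

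\emph{Reductions via Plancherel and bias control.} I would first use the Mellin--Plancherel isometry $\|h\|_{x^{2c-1}}^2=\frac{1}{2\pi}\|\sM_c[h]\|_{\IR}^2$, turning every squared distance below into an $\IL^2(\IR)$-distance of Mellin transforms, and set $\bar f_k:=\E_{f_Y}^n(\widehat f_k)=\sM_c^{-1}[\sM_c[f]\sM_c[g]\mathrm R_k]$. Two deterministic facts are then needed. Since $\sM_c[g]\mathrm R_k=1$ on $G_k^c$ and $0\le \sM_c[g]\mathrm R_k\le 1$ on $G_k$, the bias satisfies $\|\bar f_k-f\|_{x^{2c-1}}^2\le \frac{1}{2\pi}\|\mathds 1_{G_k}\sM_c[f]\|_{\IR}^2$, matching Proposition \ref{pro:rid:risk:bound}. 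Moreover, for $k'\ge k$ the increment $\mathrm R_{k'}-\mathrm R_{k'\wedge k}=\mathrm R_{k'}-\mathrm R_k$ is supported on $G_k$ with $|\sM_c[g](\mathrm R_{k'}-\mathrm R_k)|\le\mathds 1_{G_k}$, whence the deterministic increments are dominated by the same bias, $\|\bar f_{k'}-\bar f_{k'\wedge k}\|_{x^{2c-1}}^2\le\frac{1}{2\pi}\|\mathds 1_{G_k}\sM_c[f]\|_{\IR}^2$, uniformly in $k'$.

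\emph{Deterministic GL reduction.} Fixing $k\in\mathcal K_n$, I apply the triangle inequality to $\|\widehat f_{\widehat k}-f\|_{x^{2c-1}}$ through the intermediate estimator $\widehat f_{\widehat k\wedge k}$, together with $(a+b+c)^2\le 3(a^2+b^2+c^2)$; I then bound $\|\widehat f_{\widehat k}-\widehat f_{\widehat k\wedge k}\|^2$ by plugging the competing index into $\widehat A(k)$ and $\|\widehat f_{\widehat k\wedge k}-\widehat f_k\|^2$ by plugging it into $\widehat A(\widehat k)$, and finally invoke the minimality $\widehat A(\widehat k)+\chi_2\widehat V(\widehat k)\le \widehat A(k)+\chi_2\widehat V(k)$ together with $\chi_1\le\chi_2$. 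This yields the deterministic inequality
\[
\|\widehat f_{\widehat k}-f\|_{x^{2c-1}}^2\le 6\,\widehat A(k)+3(\chi_1+\chi_2)\widehat V(k)+3\|\widehat f_k-f\|_{x^{2c-1}}^2 .
\]
Taking $\E_{f_Y}^n$ and inserting $\E_{f_Y}^n\widehat V(k)=2V(k)$ and Proposition \ref{pro:rid:risk:bound} for $\E_{f_Y}^n\|\widehat f_k-f\|^2$, the whole statement reduces to the single bound $\E_{f_Y}^n\widehat A(k)\le C\|\mathds 1_{G_k}\sM_c[f]\|_{\IR}^2+C n^{-1}$, followed by minimisation over $k\in\mathcal K_n$.

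\emph{Opening up $\widehat A$ and swapping the majorant.} Writing $W_n:=\widehat{\sM}_c-\sM_c[f_Y]$ for the centered empirical Mellin process and decomposing $\widehat f_{k'}-\widehat f_{k'\wedge k}=(\bar f_{k'}-\bar f_{k'\wedge k})+\sM_c^{-1}[W_n(\mathrm R_{k'}-\mathrm R_{k'\wedge k})]$, the elementary inequality $(a+b-c)_+\le a+(b-c)_+$ and the uniform increment bound above give
\[
\widehat A(k)\le \tfrac1\pi\|\mathds 1_{G_k}\sM_c[f]\|_{\IR}^2+\sup_{k'\in\mathcal K_n}\big(2\Delta_{k,k'}-\chi_1\widehat V(k\vee k')\big)_+,\quad \Delta_{k,k'}:=\tfrac{1}{2\pi}\|W_n(\mathrm R_{k'}-\mathrm R_{k'\wedge k})\|_{\IR}^2,
\]
the majorant being read at the finer resolution $k\vee k'$ (only $k'\ge k$ contributes). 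To replace the random $\widehat V$ by $V$ I work on $\Omega_0:=\{\widehat\sigma_c\ge\sigma_c/2\}$, where $\widehat V(k\vee k')\ge V(k\vee k')$; the contribution of $\Omega_0^c$ is discarded by Cauchy--Schwarz and a Bernstein/Markov bound for $\widehat\sigma_c=n^{-1}\sum_j Y_j^{2(c-1)}$, for which $\E_{f_Y}(Y_1^{5(c-1)})<\infty$ controls $\Var(Y_1^{2(c-1)})$ and the tail and produces a residual of order $n^{-1}$.

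\emph{The main obstacle: a Talagrand inequality.} Everything now rests on $\E_{f_Y}^n\sup_{k'\in\mathcal K_n}(2\Delta_{k,k'}-\chi_1 V(k\vee k'))_+\le C n^{-1}$. I would exhibit $\Delta_{k,k'}$ as the square of a centered empirical process, $\sqrt{2\pi\,\Delta_{k,k'}}=\sup_{\|\psi\|_{\IR}=1}|\nu_n(\psi,k')|$ with $\nu_n(\psi,k')=n^{-1}\sum_j(\varphi_{\psi,k,k'}(Y_j)-\E\varphi_{\psi,k,k'}(Y_1))$ and $\varphi_{\psi,k,k'}(y)=\int_{\IR} y^{c-1+it}(\mathrm R_{k'}-\mathrm R_{k'\wedge k})(t)\overline{\psi(t)}\,dt$, and then apply Talagrand's inequality (Bousquet/Klein--Rio form) after a peeling over the countable set $\mathcal K_n$. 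The three parameters are: the mean term $\E\Delta_{k,k'}\le\frac{\sigma_c}{2\pi n}\|\mathrm R_{k'}\|_{\IR}^2=\frac{1}{2\pi}V(k')$, precisely what the subtracted penalty at resolution $k\vee k'$ absorbs; the weak variance, bounded through $\sigma_c$ and $\|g\|_{\infty,x^{2c-1}}$ (the latter controlling $\E\varphi^2$ via $f_Y=f*g$); and the strong-norm/moment parameter, where the fifth-moment hypothesis supplies the Rosenthal-type control demanded by the unboundedness of $\varphi_{\psi,k,k'}(Y_j)$. Carrying the numerical constants of Talagrand's inequality through, together with the factor $2$ in front of $\Delta_{k,k'}$ and the factor $2$ built into $\widehat V$, is exactly what forces the threshold $\chi_1\ge 72$, while the exponential factors make the residual summable to $O(n^{-1})$. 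This concentration step --- identifying and uniformly bounding the three Talagrand parameters over $k'\in\mathcal K_n$ and matching the mean term to the penalty at the finer resolution --- is the crux; the reductions of the preceding paragraphs are routine by comparison.
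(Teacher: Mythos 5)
Your overall architecture --- a deterministic Goldenshluger--Lepski reduction, a Talagrand-based bound for the stochastic supremum, and a moment bound for the empirical penalty --- is exactly that of the paper (Lemmata \ref{gde:mme:re:de} and \ref{gde:mme:re:ta}), and your reading of the penalty at the finer index $k\vee k'$ agrees with what the paper's proof actually uses (the displayed definition of $\widehat A$ with $\widehat V(k)$ is evidently a typo). Your deterministic steps (bias and increment bounds, $\E_{f_Y}^n(\widehat V(k))=2V(k)$, the $\chi_1\geq 72$ bookkeeping) are sound. However, your central concentration step would fail as written. The Klein--Rio form of Talagrand's inequality (Lemma \ref{tal:re}) requires a finite sup-norm parameter $\psi\geq \sup_{h}\sup_z |\nu_h(z)|$, and your class $\varphi_{\psi,k,k'}(Y_j)$ is unbounded because $Y_1^{c-1}$ is; a ``Rosenthal-type'' moment control cannot be substituted for $\psi$ inside the inequality. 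The missing device is truncation: the paper splits $\widehat{\sM}_c=\widehat{\sM}_{c,1}+\widehat{\sM}_{c,2}$ according to whether $Y_j^{c-1}<c_n$ or not, with $c_n\asymp \sqrt{n\sigma_c}/\log n$, applies Talagrand only to the bounded part (with $\psi^2=c_n^2\|\mathrm R_k\|_{\IR}^2$, $\Psi^2=\sigma_c\|\mathrm R_k\|_{\IR}^2/n$, $\tau=\|g\|_{\infty,x^{2c-1}}\E_f(X_1^{2(c-1)})\|\mathrm R_k^2\|_{\infty}$, and summability of the resulting bounds via $\|\mathrm R_k^2\|_{\infty}\lesssim k^{2}$ and $\|\mathrm R_k\|_{\IR}^2\asymp k^{2+1/\gamma}$ under \textbf{[G1]}), and disposes of the tail part by a plain Markov argument, $\E_{f_Y}^n(\sup_{k}\sup_h \nu_{k,2}(h)^2)\leq c_n^{-3}\,\E_{f_Y}(Y_1^{5(c-1)})=O(n^{-1})$; this tail bound is the only place the fifth moment enters. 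Without this split there is no admissible $\psi$, and the peeling over $\mathcal K_n$ cannot start.

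The second gap is your treatment of $\widehat V$ versus $V$ on $\Omega_0=\{\widehat\sigma_c\geq \sigma_c/2\}$. On $\Omega_0^c$ the penalty is lost entirely, and under the stated hypotheses Bernstein is unavailable: $Y_1^{2(c-1)}$ only has a moment of order $5/2$, so Chebyshev gives at best $\mathbb P(\Omega_0^c)\lesssim n^{-1}$ (using the fourth moment of $Y_1^{c-1}$, which follows by interpolation). Since $\E_{f_Y}^n\bigl[(\sup_{k'}\Delta_{k,k'})^2\bigr]=O(1)$ on $\mathcal K_n$ (where $\|\mathrm R_{K_n}\|_{\IR}^2\leq n$), Cauchy--Schwarz then yields a residual of order $n^{-1/2}$, not the claimed $n^{-1}$; your route would need $\mathbb P(\Omega_0^c)\lesssim n^{-2}$, i.e.\ roughly an eighth moment. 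The paper avoids the event altogether by the uniform additive bound $(V(k)-\widehat V(k))_+=n^{-1}\|\mathrm R_k\|_{\IR}^2\,(\sigma_c-2\widehat\sigma_c)_+\leq (\sigma_c-2\widehat\sigma_c)_+$ for all $k\in\mathcal K_n$ --- this is precisely where the constraint $\|\mathrm R_k\|_{\IR}^2\leq n$ and the factor $2$ built into $\widehat V$ are used --- followed by $\E_{f_Y}^n((\sigma_c-2\widehat\sigma_c)_+)\leq 2\,\E_{f_Y}^n(|\sigma_c-\widehat\sigma_c|\1_{\Omega^c})\leq 4\,\Var_{f_Y}^n(\widehat\sigma_c)/\sigma_c=O(n^{-1})$, which requires only the fourth moment. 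Both of your gaps are repairable, but the repairs are exactly the paper's truncation split and additive penalty comparison, and they are where the assumptions $\E_{f_Y}(Y_1^{5(c-1)})<\infty$ and $\|g\|_{\infty,x^{2c-1}}<\infty$ actually do their work.
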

	Assuming now that the density lies in a Mellin-Sobolev ellipsoid, we can deduce directly the following corollary whose proof is thus omitted.
	\begin{corollary}
		Let $c\in \IR$, $s,L\in \IRp$ and $f\in \mathbb D^s_c(L)$. Assume further that  $\E_{f_Y}(Y_1^{5(c-1)})< \infty$, $\|g \|_{\infty,x^{2c-1}}<\infty$ and \textbf{[G1]} is fulfilled.   Then for $\chi_2 \geq \chi_1 \geq 72$,
		\begin{align*}
		\E_{f_Y}^n(\|\widehat f_{\widehat k}-f\|_{x^{2c-1}}^2) \leq C(L, s, r, g, \E_f(X_1^{5(c-1)})) \, n^{-2s/(2s+2\gamma+1)}
		\end{align*}
		where $C(L, s, r, g, \E_f(X_1^{5(c-1)}))$ is a positive constant depending on $L, s, r, g$ and $\E_f(X_1^{5(c-1)})$.
	\end{corollary}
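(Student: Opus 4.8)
The plan is to obtain the corollary as a direct consequence of the oracle inequality in Theorem \ref{thm:dd:ridge}, combined with the explicit bias and variance bounds already recorded over the Mellin--Sobolev class. First I would observe that the hypotheses of the corollary are exactly those of Theorem \ref{thm:dd:ridge} (with $f\in\mathbb D^s_c(L)\subseteq\mathbb W^s_c(L)$), so that
\begin{align*}
\E_{f_Y}^n(\|\widehat f_{\widehat k}- f\|_{x^{2c-1}}^2) \leq C_1 \inf_{k\in \mathcal K_n} \left( \|\mathds 1_{G_k} \sM_c[f]\|_{\IR}^2 + V(k) \right) + \frac{C_2}{n}
\end{align*}
holds, with $C_1$ depending only on $\chi_1,\chi_2$ and $C_2$ on $\E_{f_Y}(Y_1^{5(c-1)})$, $\|g\|_{\infty,x^{2c-1}}$, $g$ and $r$. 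Since $Y_1=X_1U_1$ with $X_1$ and $U_1$ independent, the moment $\E_{f_Y}(Y_1^{5(c-1)})=\E_f(X_1^{5(c-1)})\,\E_g(U_1^{5(c-1)})$ factorises, so $C_2$ may be viewed as a constant depending on $g$, $r$ and $\E_f(X_1^{5(c-1)})$ only.

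The second step is to bound the infimum from above by evaluating the bracket at a single, well-chosen bandwidth. For $f\in\mathbb W^s_c(L)$ the bias estimate stated after the definition of the Mellin--Sobolev ellipsoids gives $\|\mathds 1_{G_k} \sM_c[f]\|_{\IR}^2 \leq C(g,L,s)\,k^{-2s/\gamma}$, while the variance bound established in the proof of Corollary \ref{cor:rid:consis:ex} yields $V(k)=\sigma_c\|\mathrm R_k\|_{\IR}^2 n^{-1}\leq C_{g,r}\,\sigma_c\,k^{2+\gamma^{-1}}n^{-1}$, where $\sigma_c=\E_{f_Y}(Y_1^{2(c-1)})=\E_f(X_1^{2(c-1)})\,\E_g(U_1^{2(c-1)})\leq L\,\E_g(U_1^{2(c-1)})<\infty$ using the membership $f\in\mathbb D^s_c(L)$ and the finiteness of the higher moment of $g$. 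I would then plug in the oracle value $k_o:=\lfloor n^{\gamma/(2s+2\gamma+1)}\rfloor$ from Theorem \ref{thm:upp:bound}. A direct computation shows this choice balances the two terms: since $(2+\gamma^{-1})\gamma=2\gamma+1$, one gets $k_o^{-2s/\gamma}\asymp n^{-2s/(2s+2\gamma+1)}$ and $k_o^{2+\gamma^{-1}}n^{-1}\asymp n^{(2\gamma+1)/(2s+2\gamma+1)-1}=n^{-2s/(2s+2\gamma+1)}$, so both contributions are of the asserted order.

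Before concluding I would check that $k_o$ is admissible, i.e. $k_o\in\mathcal K_n=\{k\in\mathbb N:\|\mathrm R_k\|_{\IR}^2\leq n\}$. Using $\|\mathrm R_k\|_{\IR}^2\leq C_{g,r}k^{2+\gamma^{-1}}$ again gives $\|\mathrm R_{k_o}\|_{\IR}^2\leq C_{g,r}n^{(2\gamma+1)/(2s+2\gamma+1)}$, and since the exponent $(2\gamma+1)/(2s+2\gamma+1)<1$ for $s>0$, this is at most $n$ for all $n$ large enough; the finitely many small $n$ are handled by enlarging the final constant. Finally, as $2s/(2s+2\gamma+1)<1$, the remainder $C_2/n$ decays strictly faster than $n^{-2s/(2s+2\gamma+1)}$ and is absorbed into the leading term. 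Collecting $C_1$, the bias constant $C(g,L,s)$, the variance constant $C_{g,r}\sigma_c$ and the absorbed remainder produces a single constant depending on $L,s,r,g$ and $\E_f(X_1^{5(c-1)})$, as claimed. I do not expect a genuine obstacle here, since the statement is a routine corollary of the oracle inequality; the only points needing minor care are the admissibility $k_o\in\mathcal K_n$ and the finiteness of $\sigma_c$, both of which follow at once from $f\in\mathbb D^s_c(L)$ and assumption \textbf{[G1]}.
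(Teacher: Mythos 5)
Your proposal is correct and follows exactly the route the paper intends: the paper omits the proof precisely because the corollary is meant to be the direct combination of the oracle inequality of Theorem \ref{thm:dd:ridge} with the bias bound $\|\mathds 1_{G_k}\sM_c[f]\|_{\IR}^2\leq C(g,L,s)k^{-2s/\gamma}$ and the variance bound $\|\mathrm R_k\|_{\IR}^2\leq C_{g,r}k^{2+\gamma^{-1}}$, evaluated at $k_o\asymp n^{\gamma/(2s+2\gamma+1)}$ as in Theorem \ref{thm:upp:bound}. Your additional checks (admissibility $k_o\in\mathcal K_n$ for large $n$, finiteness of $\sigma_c$ via $f\in\mathbb D^s_c(L)$ and moment interpolation for $g$, absorption of $C_2/n$) are exactly the minor details the paper leaves implicit.
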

\section*{Conclusion}
Let us summarise the presented results of the ridge estimator $\widehat  f_{k,\xi, r}$ in comparison to the properties of the spectral cut-off estimator $\widetilde f_k$, considered by \cite{Brenner-MiguelComteJohannes2020} and \cite{Brenner-Miguel2021}. For the definition of the estimator, the spectral cut-off estimator needs the assumption \textbf{[G0]}. This assumption already implies the existence of a consistent version of the spectral cut-off estimator. For the definition of the ridge estimator the assumption \textbf{[G0]} is not necessary. Nevertheless, in order to show that there exists a consistent version of the ridge estimator, we needed assumption \textbf{[G-1]}, which is weaker than \textbf{[G0]}. In this scenario, the estimator $\widehat f_{k,\xi,r}$ seems to be favourable if one aims to consider minimal assumptions on the error density, for instance to construct a strong consistent estimator, compare Remark \ref{rem:st:con}. As soon as we are interested in developing the minimax theory of the estimators, the assumption \textbf{[G1]} is natural to be considered. It is worth pointing out, that \textbf{[G1]} implies \textbf{[G0]} and therefore \textbf{[G-1]}. Here the assumptions of Proposition \ref{pro:rid:risk:bound}, which are needed for  the minimax optimality of both estimators, are identical to the assumptions of \cite{Brenner-Miguel2021}. Thus none of the estimators seem to be more favourable in terms of minimax-optimality. Again, for the data-driven estimators $\widehat f_{\widehat k}$ and  $\widetilde f_{\widetilde k}$, proposed by \cite{Brenner-MiguelComteJohannes2020}, the assumptions on the error densities are identical. Here it should be mentioned that the authors \cite{Brenner-MiguelComteJohannes2020} have proven the case $c=1$. The general case for $c\in \IR$ can be easily shown using the same strategies as in the proof of Theorem \ref{thm:dd:ridge}. In total, we can say that for the construction of an estimator with minimal assumption on the error density $g$, the ridge estimator seems to be favourable, in the sense, that it  requires weaker assumptions on $g$. As soon as we consider smooth error densities, that is under assumption \textbf{[G1]}, neither the ridge estimator nor the spectral cut-off estimator seems to be more favourable in terms of minimax-optimality and data-driven estimation.

\section{Numerical study\label{si}}
	In this section, we illustrate the behaviour of the data-driven ridge estimator $\widehat f_{\widehat k}=\widehat f_{\widehat k, 0, r}$ presented in eq. \eqref{eq:ridge} and \eqref{gde:mme:lm} and compare it with the spectral cut-off estimator $\widetilde f_{\widetilde k}$, presented in \cite{Brenner-MiguelComteJohannes2020}, where
	\begin{align*}
	\widetilde k=\argmin_{k\in \widetilde{\mathcal K}_n} -\|\widetilde f_{k}\|_{x^{2c-1}}^2 + \widehat{\mathrm{pen}}(k)
	\end{align*}
	with $\widehat{\mathrm{pen}}(k):= 2\chi\widehat \sigma_c \|\mathds 1_{[-k,k]} \sM_c[g]^{-1}\|_{\IR}^2/(2\pi n)$ and $\widetilde{\mathcal K}_n:=\{k \in \mathbb N: \|\mathds 1_{[-k,k]} \sM_c[g]^{-1}\|_{\IR}^2\leq 2\pi n\}$. To do so, we use the following examples for the unknown density $f$,
\begin{enumerate}
	\item[$(i)$] \highlight{Beta Distribution:} $f(x)= B(2,5)^{-1}x (1-x)^4 \mathds 1_{(0,1)}(x), x\in \IRp$, 
	\item[$(ii)$] \highlight{Log-Gamma Distirbution:} $f(x)= 5^5\Gamma(5)^{-1}x^{-6} \log(x)^4 \mathds 1_{(1,\infty)}(x), x\in \IRp$, 
	\item[$(iii)$] \highlight{Gamma Distribution:} $f(x)= \Gamma(5)^{-1}x^4 \exp(-x) \mathds 1_{(0,\infty)}(x), x\in \IRp$, and 
	\item[$(iv)$] \highlight{Log-Normal Distiribution:} $f(x)= (0.08\pi x^2)^{-1/2}\exp(-\log(x)^2/0.08)\mathds 1_{(0,\infty)}(x), x\in \IRp$.
\end{enumerate} 
A detailed discussion of these examples in terms of the decay of their Mellin transform can be found in \cite{Brenner-Miguel2021}. To visualize the behaviour of the estimator, we use the following examples of error densities $g$,
\begin{enumerate}
	\item[$a)$] \highlight{Symmetric noise:} $g(x)= \mathds 1_{(0.5, 1.5)}(x), x\in \IRp$, and
	\item[$b)$] \highlight{Beta Distribution:} $g(x)= 2x\mathds 1_{(0,1)}(x), x\in \IRp$.
\end{enumerate}
Here it is worth pointing out that the example $a)$ and $b)$ fulfill \textbf{[G1]} with $\gamma=1$ and $\gamma =2$.  By minimising an integrated weighted squared error over a family of histogram
densities with randomly  drawn partitions  and weights we select for $a)$ $\chi_1=\chi_2=72$ for $\widehat f_{\widehat k}$ and $\chi = 5$ for $\widetilde f_{\widetilde k}$. For the case $b)$ we choose $\chi_1=\chi_2=6$ and $\chi=3$. In both cases, we have set $r=2$.

\begin{minipage}{\textwidth}
	\centering{\begin{minipage}[t]{0.35\textwidth}
			\includegraphics[width=\textwidth,height=50mm]{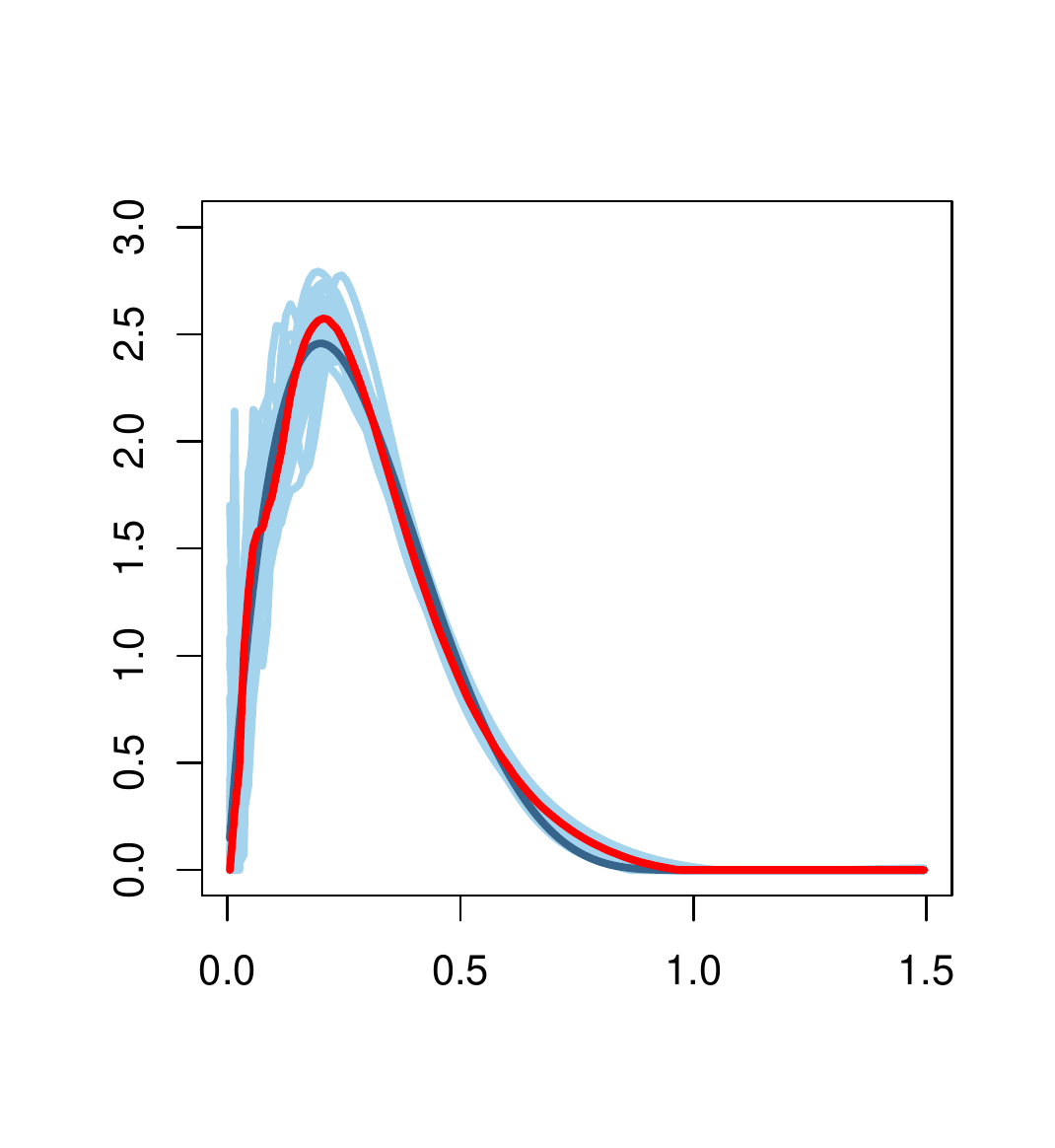}
		\end{minipage}
		\begin{minipage}[t]{0.35\textwidth}
			\includegraphics[width=\textwidth,height=50mm]{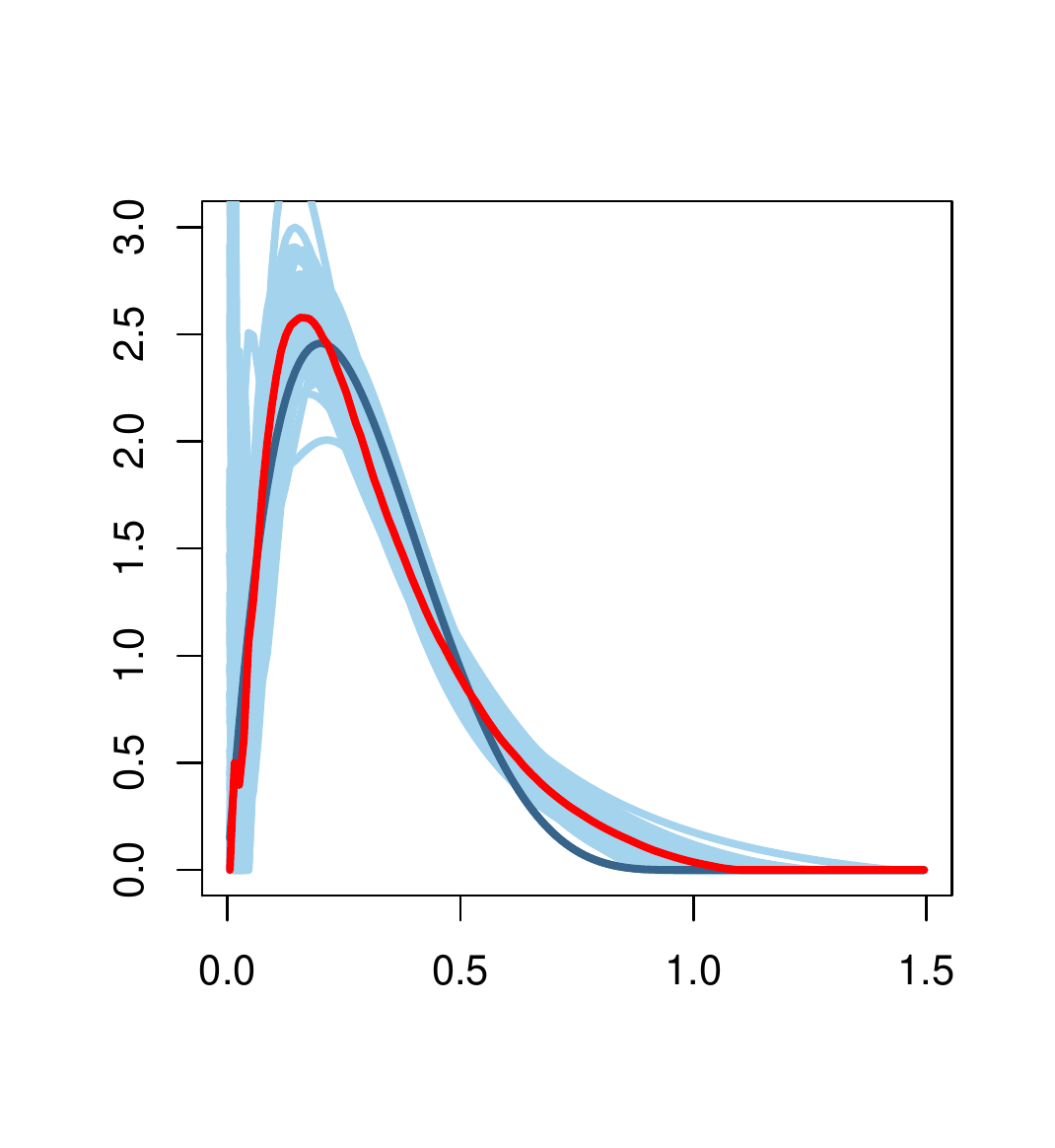}
	\end{minipage}}
	\centering{\begin{minipage}[t]{0.35\textwidth}
			\includegraphics[width=\textwidth,height=50mm]{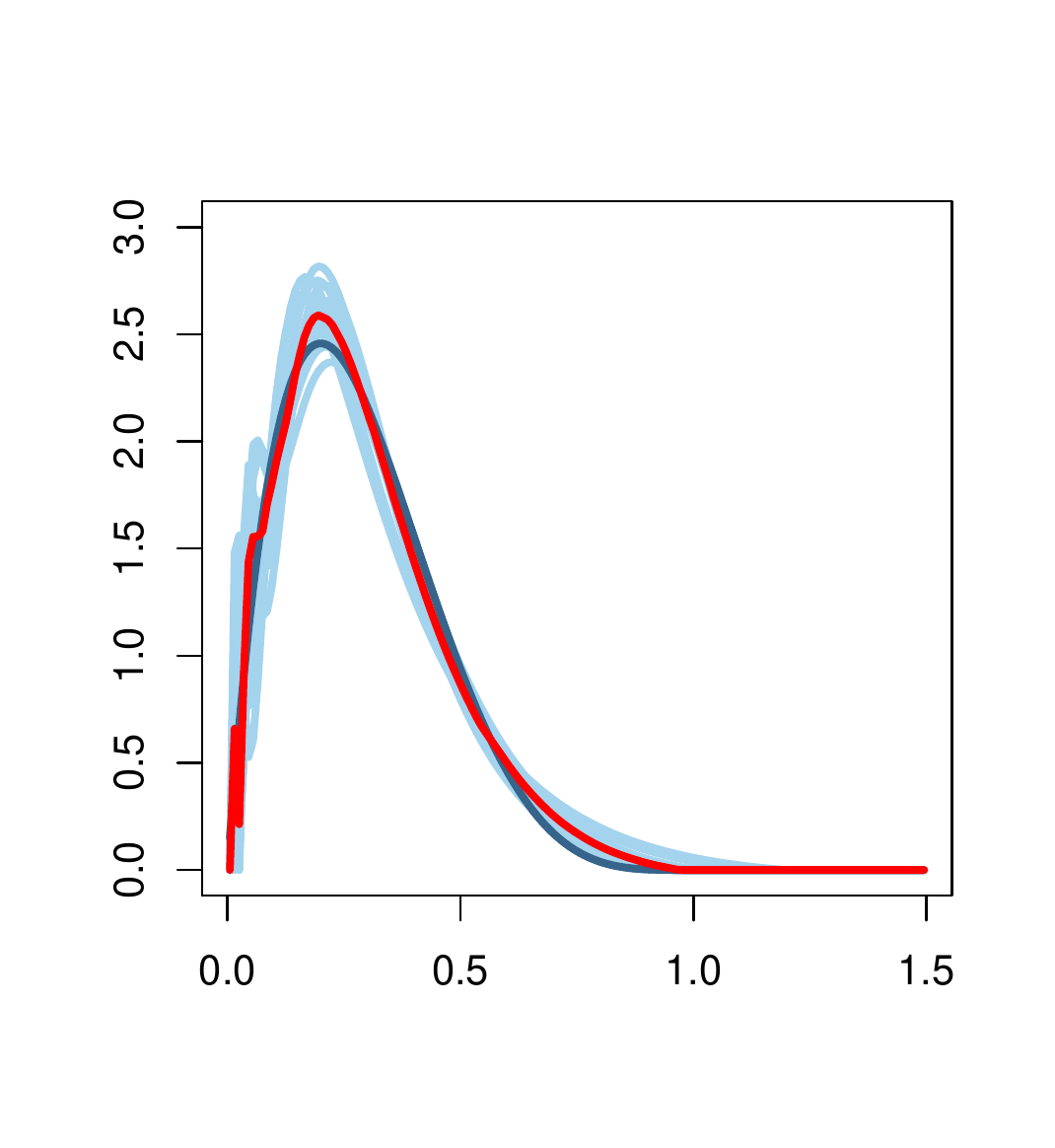}
		\end{minipage}
		\begin{minipage}[t]{0.35\textwidth}
			\includegraphics[width=\textwidth,height=50mm]{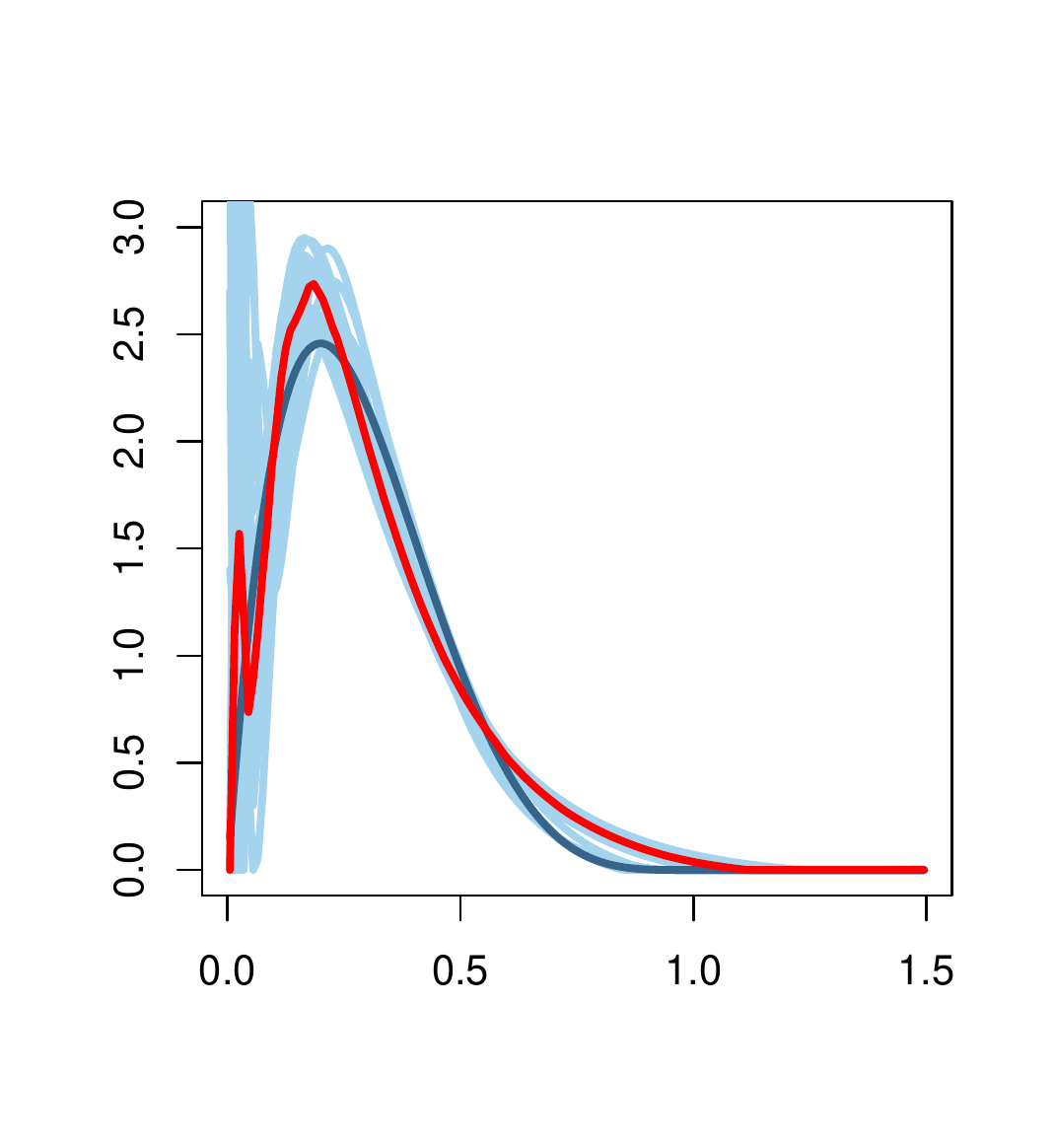}
	\end{minipage}}
	
	\captionof{figure}{\label{figure:1}The estimator $\widehat f_{\widehat k}$ (top) and $\widetilde f_{\widetilde k}$ (bottom) is depicted for 
		50  Monte-Carlo simulations with  sample size  $n=2000$  in the case $(i)$ under the error density $a)$ (left) and $b)$ (right) for $c=1$. The true density $\So$ is given by the black curve while the red  curve is the point-wise empirical median of the 50 estimates.}
\end{minipage}

Figure \ref{figure:1} shows that both estimators behave similarly. As suggested by the theory, the reconstruction of the density $f$ from the observation $(Y_j)_{j\in \llbracket n \rrbracket}$ seems to be less difficult if the error variable is uniformly distibuted, case $a)$, than if the error variable is Beta distributed, case $b)$.

\begin{minipage}{\textwidth}
	\centering{\begin{minipage}[t]{0.32\textwidth}
			\includegraphics[width=\textwidth,height=50mm]{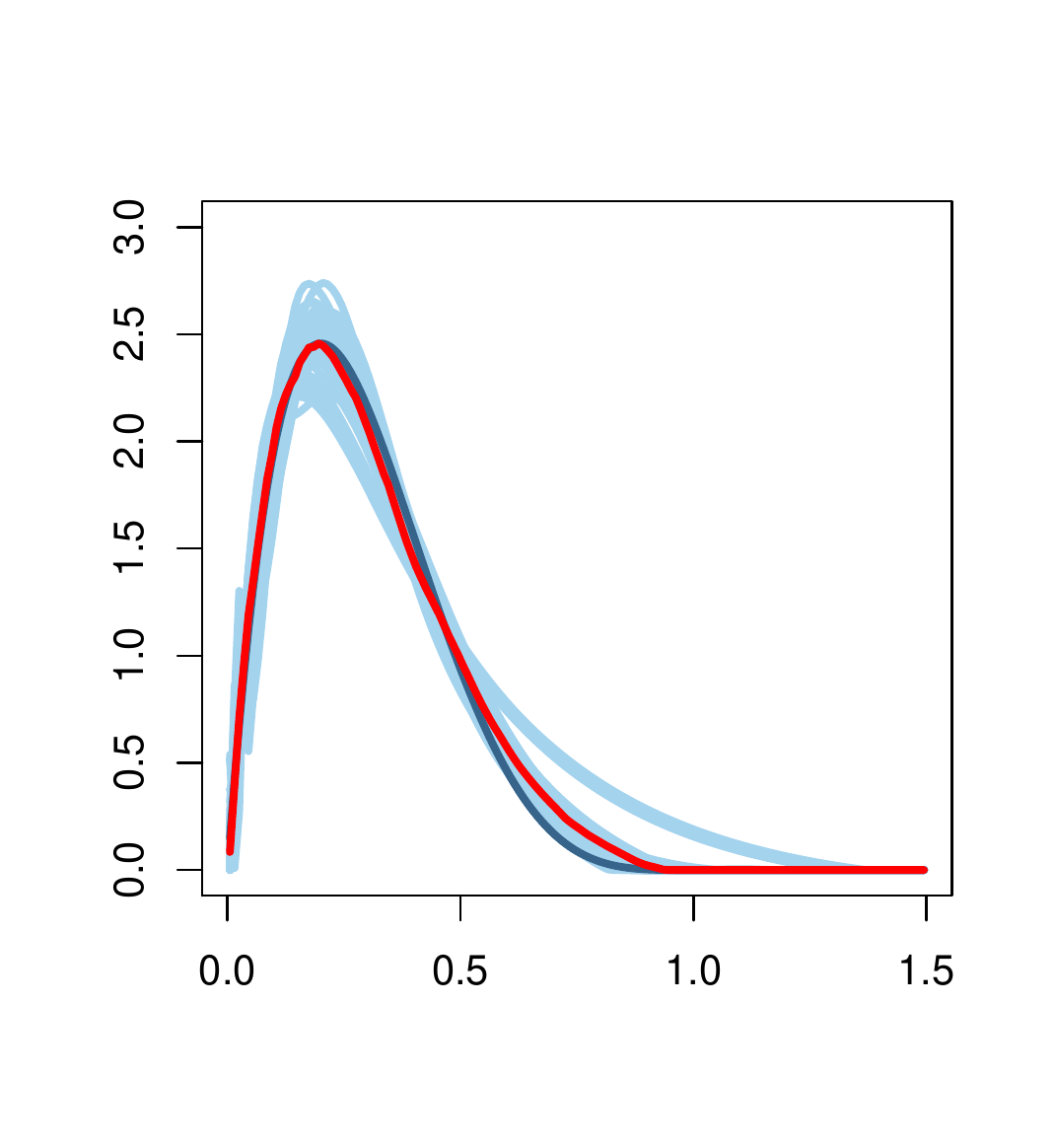}
		\end{minipage}
	\begin{minipage}[t]{0.32\textwidth}
		\includegraphics[width=\textwidth,height=50mm]{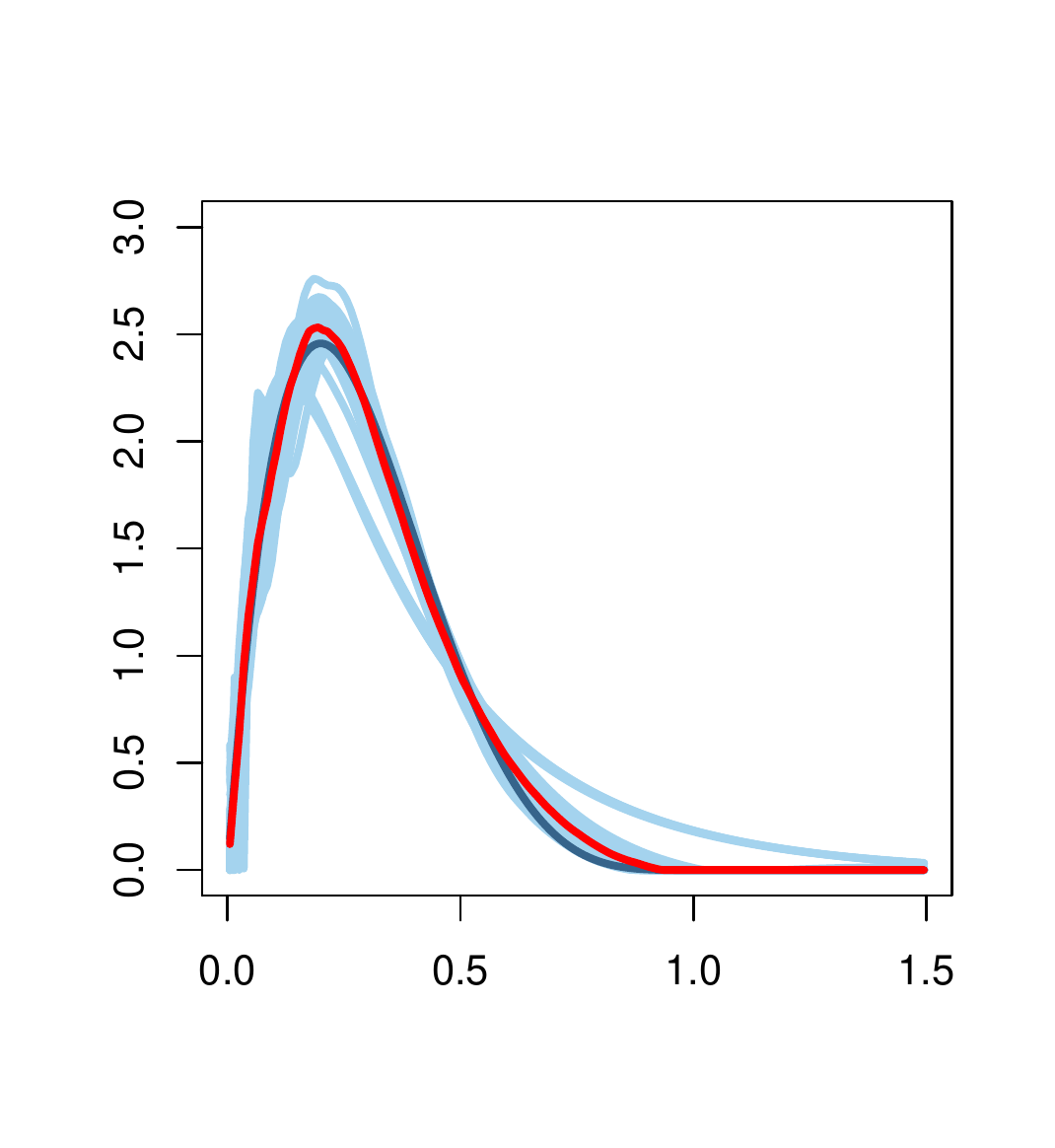}
	\end{minipage}
		\begin{minipage}[t]{0.32\textwidth}
			\includegraphics[width=\textwidth,height=50mm]{pics/beta_2000_ridge_1_sym.pdf}
	\end{minipage}}
	\centering{\begin{minipage}[t]{0.32\textwidth}
			\includegraphics[width=\textwidth,height=50mm]{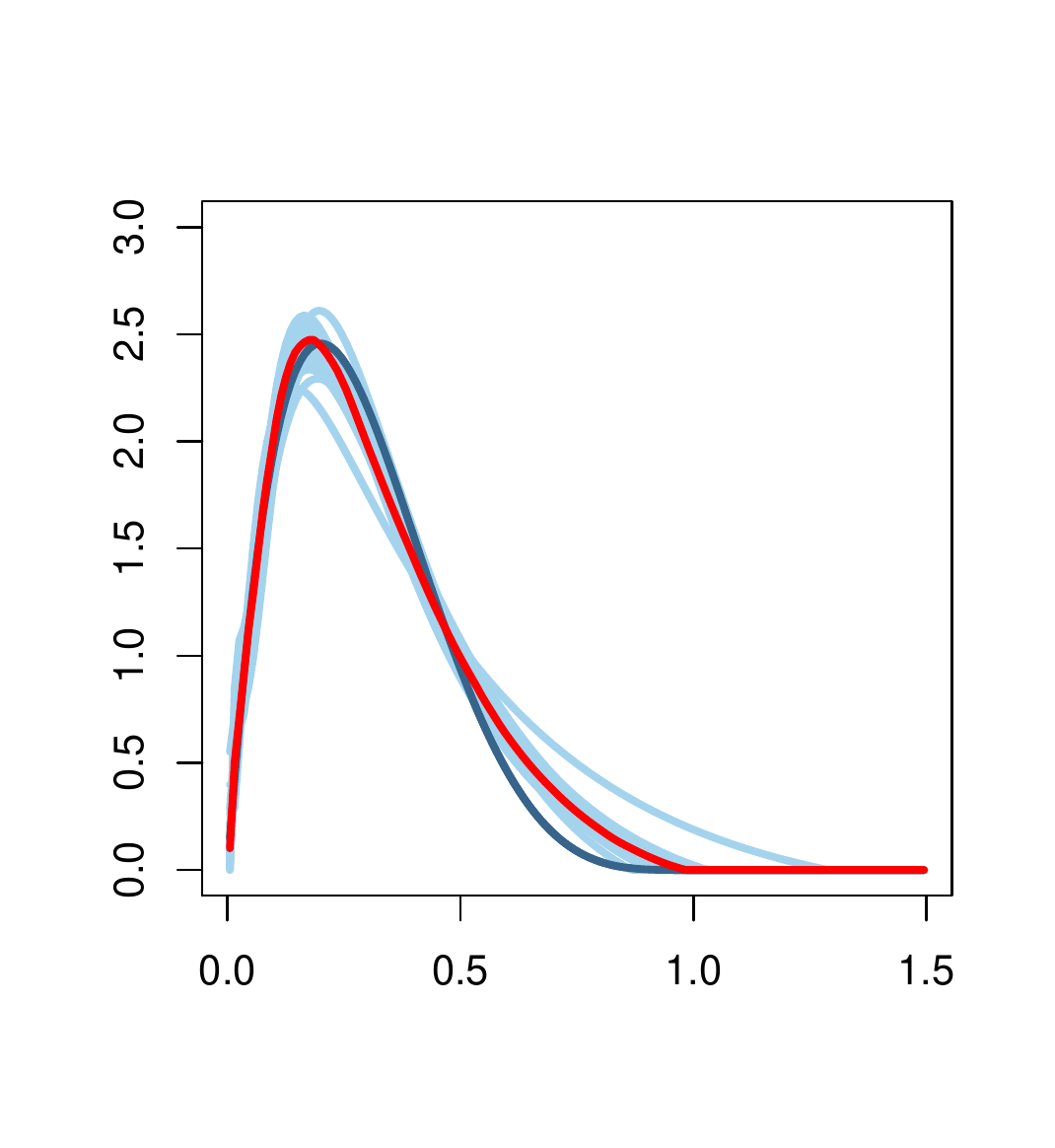}
		\end{minipage}
	\begin{minipage}[t]{0.32\textwidth}
		\includegraphics[width=\textwidth,height=50mm]{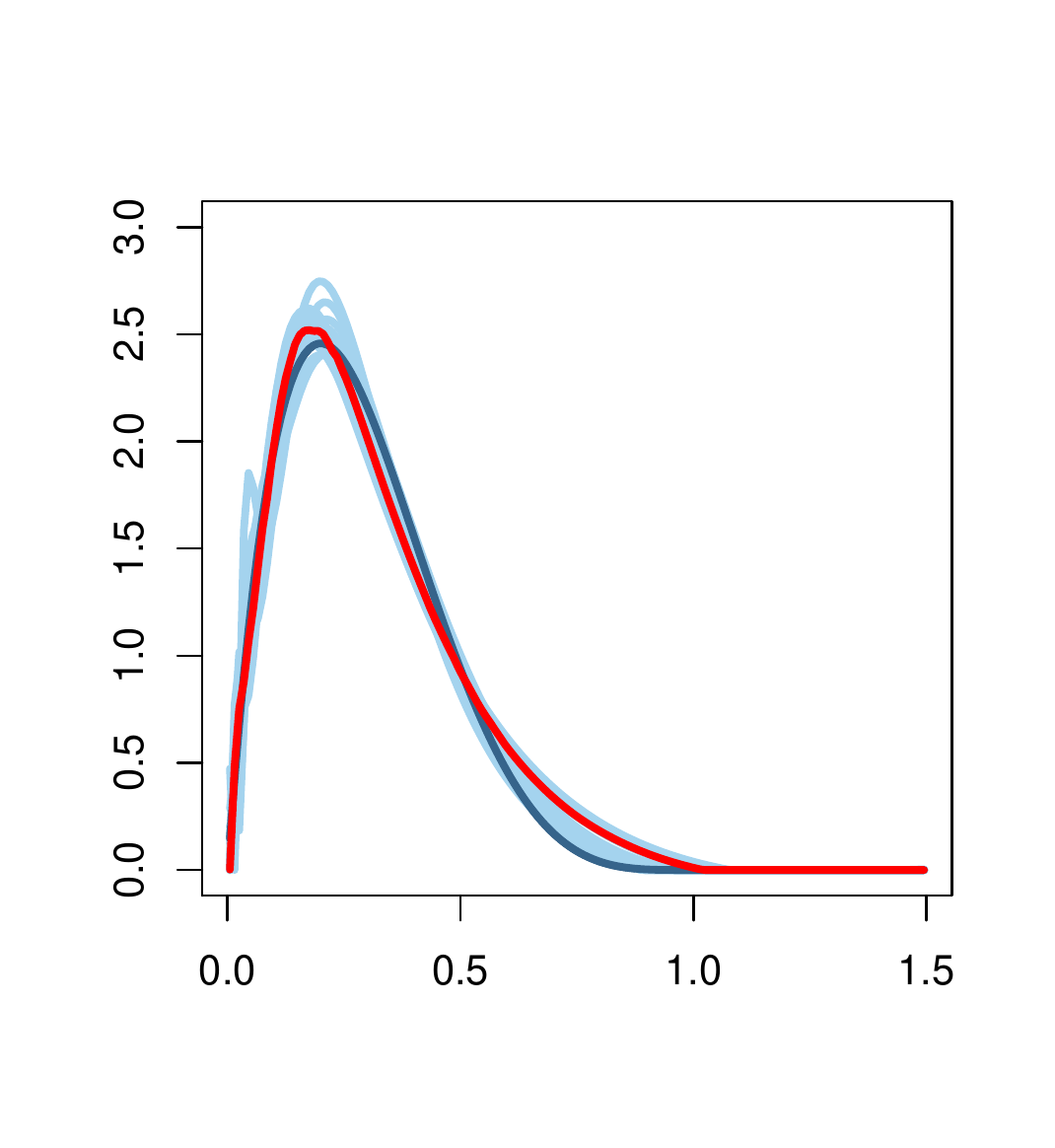}
	\end{minipage}
		\begin{minipage}[t]{0.32\textwidth}
			\includegraphics[width=\textwidth,height=50mm]{pics/beta_2000_sc_1_sym.pdf}
	\end{minipage}}
	
	\captionof{figure}{\label{figure:2}The estimator $\widehat f_{\widehat k}$ (top) and $\widetilde f_{\widetilde k}$ (bottom) is depicted for 
		50  Monte-Carlo simulations with  sample size  $n=2000$  in the case $(i)$ under the error density $a)$ for $c=0$ (left), $c=1/2$ (middle) and $c=1$ (right). The true density $\So$ is given by the black curve while the red  curve is the point-wise empirical median of the 50 estimates.}
\end{minipage}
Again we see that both estimators react analogously to varying values of the model parameter $c\in \IR$. Looking at the medians in Figure \ref{figure:2}, for $c=0$ the median seems to be closer to the true density for smaller values of $x\in \pRz$. For $c=1$ the opposite effects seems to occur. For $c=1/2$, the case of the unweighted $\mathbb L^2$-distance, such effects cannot be observed. Regarding the risk, this seems natural as the weight function for $c=0$ is montonically decreasing, while for $c=1$ it is monotonically increasing.

\begin{table}[ht]
	\centerline{\begin{tabular}{@{}cc||c c|cc|cc|cc@{}}
			&Case& $(i)$ & & $(ii)$ & & $(iii)$&&$(iv)$&\\
			&Sample size& $500$& $2000$ & $500$ & $2000$ & $500$ & $2000$ &$500$&$2000$\\\midrule\midrule
			$a)$&Ridge & $0.94$& $0.31$ & $2.17$ & $1.54$ & $0.63$ & $0.17$ &$7.13$&$2.38$\\
			&Spectral & $1.10$& $0.38$ & $2.03$ & $1.26$ & $0.52$ & $0.16$ &$15.07$&$2.34$ \\
			$b)$&Ridge & $2.32$& $1.43$ & $5.90$ & $3.81$ & $1.19$ & $0.47$ &$25.84$&$11.03$\\
			&Spectral & $3.95$& $1.56$ & $10.63$ & $7.12$ & $1.52$ & $0.84$ &$33.95$&$13.45$
			 \\\bottomrule
	\end{tabular}}
	\caption{The entries showcase the MISE (scaled by a factor of 100) obtained by Monte-Carlo simulations each with 500 iterations. We take a look at different densities $f$ and $g$, two distinct sample sizes and for both estimators $\widehat f_{\widehat k}$ and $\widetilde f_{\widetilde k}$ we set $c=1$.}
	\label{table:num_res}
\end{table}

\appendix

\section{Preliminary}\label{a:prel}
We will start by defining the Mellin transform for square-integrable functions $h \in \IL^2(\IRp, x^{2c-1})$ and collect some of its major properties. Proof sketches for all the mentioned results can be found in \cite{Brenner-MiguelComteJohannes2020}, respectively \cite{Brenner-Miguel2021}.

\paragraph{The Mellin transform}
To define the Mellin transform of a  square-integrable function, that is for $h_1\in \IL^2(\IR_+, x^{2c-1})$, we make use of the definition of the Fourier-Plancherel transform. To do so, let $\varphi:\IR \rightarrow \IR_+, x\mapsto \exp(-2\pi x)$ and $\varphi^{-1}: \IR_+ \rightarrow \IR$ be its inverse. Then, as diffeomorphisms, $\varphi, \varphi^{-1}$ map  Lebesgue null sets on Lebesgue null sets. Thus the isomorphism $\Phi_c:\IL^2(\IR_+,x^{2c-1}) \rightarrow \IL^2(\IR), h\mapsto \varphi^c \cdot(h\circ \varphi)$ is well-defined. Moreover, let $\Phi^{-1}_c: \IL^2(\IR) \rightarrow \IL^2(\IR_+,x^{2c-1})$ be its inverse. Then for $h\in \IL^2(\IR_+,x^{2c-1})$ we define the Mellin transform of $h$ developed in $c\in \IR$ by
\begin{align*}\label{eq:mel;def}
\sM_c[h](t):= (2\pi) \mathcal F[\Phi_c[h]](t), \quad t\in \IR,
\end{align*} 
where $\mathcal F: \IL^2(\IR)\rightarrow \IL^2(\IR), H\mapsto (t\mapsto \mathcal F[H](t):=\lim_{k\rightarrow \infty}\int_{-k}^k \exp(-2\pi i t x) H(x) dt)$ is the Plancherel-Fourier transform. Due to this definition several properties of the Mellin transform can be deduced from the well-known theory of Fourier transforms. In the case that $h\in \IL^1(\IR_+,x^{c-1}) \cap \IL^2(\IR_+,x^{2c-1})$ we have 

\begin{align}
\sM_c[h](t) =\int_0^{\infty} x^{c-1+it} h(x)dx, \quad  t\in \IR
\end{align}
which coincides with the usual notion of Mellin transforms as considered in \cite{ParisKaminski2001}. 

Now, due to the construction of the operator $\mathcal M_c: \IL^2(\IR_+,x^{2c-1}) \rightarrow \IL^2(\IR)$ it can easily be seen that it is an isomorphism. We denote by $\mathcal M_c^{-1}: \IL^2(\IR) \rightarrow \IL^2(\IR_+,x^{2c-1})$ its inverse. If additionally to $H\in \IL^2(\IR)$, $H\in \IL^1(\IR)$, we can express the inverse Mellin transform explicitly through
\begin{align}\label{eq:Mel:inv}
\sM_{c}^{-1}[H](x)= \frac{1}{2\pi } \int_{-\infty}^{\infty} x^{-c-it} H(t) dt, \quad  x\in \IR_+.
\end{align} 
Furthermore, we can directly show that a Plancherel-type equation holds for the Mellin transform, that is for all $h_1, h_2 \in \IL(\IR_+,x^{2c-1})$, 
\begin{align}\label{eq:Mel:plan}
\hspace*{-0.5cm}\langle h_1, h_2 \rangle_{x^{2c-1}} = (2\pi)^{-1} \langle \sM_c[h_1], \sM_c[h_2] \rangle_{\IR} \quad \text{ whence } \quad \| h_1\|_{x^{2c-1}}^2=(2\pi)^{-1} \|\sM_c[h]\|_{\IR}^2.
\end{align} 
\paragraph{Usefull Inequality}
The following inequality is due to
\cite{Talagrand1996}, the formulation can be found for
example in \cite{KleinRio2005}.
\begin{lemma}(Talagrand's inequality)\label{tal:re} Let
	$X_1,\dots,X_n$ be independent $\mathcal Z$-valued random variables and let \begin{align*}\bar{\nu}_{h}=n^{-1}\sum_{i=1}^n\left[\nu_{h}(X_i)-\E\left(\nu_{h}(X_i)\right) \right]\end{align*} for $\nu_{h}$ belonging to a countable class $\{\nu_{h},h\in\mathcal H\}$ of measurable functions. Then,
	\begin{align}
	\E(\sup_{h\in\mathcal H}|\overline{\nu}_h|^2-6\Psi^2)_+\leq C \left[\frac{\tau}{n}\exp\left(\frac{-n\Psi^2}{6\tau}\right)+\frac{\psi^2}{n^2}\exp\left(\frac{-K n \Psi}{\psi}\right) \right]\label{tal:re1} 
	\end{align}
	with numerical constants $K=({\sqrt{2}-1})/({21\sqrt{2}})$ and $C>0$ and where
	\begin{equation*}
	\sup_{h\in\mathcal H}\sup_{z\in\mathcal Z}|\nu_{h}(z)|\leq \psi,\qquad \E(\sup_{h\in \mathcal H}|\bar{\nu}_{h}|)\leq \Psi,\qquad \sup_{h\in \mathcal H}\frac{1}{n}\sum_{i=1}^n \Var(\nu_{h}(X_i))\leq \tau.
	\end{equation*}
\end{lemma}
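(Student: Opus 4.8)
The inequality is the moment (or ``integrated-tail'') reformulation of Talagrand's concentration inequality for suprema of bounded empirical processes, and the plan is to derive it from the exponential concentration bound of \cite{Talagrand1996}, in the sharpened form recorded in \cite{KleinRio2005}, by the standard layer-cake device. First I would pass from the normalised statistic to the empirical process itself: set $Z:=n\sup_{h\in\mathcal H}|\bar\nu_h|$, so that $\sup_h|\bar\nu_h|=Z/n$ and hence $\E(\sup_h|\bar\nu_h|^2-6\Psi^2)_+=n^{-2}\,\E(Z^2-6n^2\Psi^2)_+$. Replacing the countable class $\{\nu_h\}$ by $\{\pm\nu_h\}$ turns the absolute value into a one-sided supremum without changing the uniform bound $\psi$ or the variance proxy $\tau$, and by hypothesis $\E Z\le n\Psi$. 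The three inputs are therefore the envelope $b=\psi$, the variance factor $v=n\tau$ (since $\sum_i\Var(\nu_h(X_i))\le n\tau$ for every $h$), and the mean bound $\E Z\le n\Psi$.

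The analytic heart is the concentration inequality itself, which I would invoke as an established black box: for every $\lambda>0$ one has a Bousquet-type bound of the form
\begin{align*}
P\big(Z\ge \E Z+\lambda\big)\le \exp\!\left(-\frac{\lambda^2}{2(v+2b\,\E Z)+\tfrac23 b\lambda}\right).
\end{align*}
This is the genuinely hard probabilistic content (exactly what \cite{Talagrand1996} and \cite{KleinRio2005} provide); its proof proceeds either through Talagrand's original induction on coordinates via the convex-distance inequality, or through the modern entropy method, that is, tensorisation of entropy combined with a logarithmic-Sobolev estimate in each coordinate. From this bound I would split the denominator into its two regimes: when $\tfrac23 b\lambda\le 2(v+2b\,\E Z)$ the quadratic term dominates and yields a sub-Gaussian tail $\exp(-\lambda^2/(c\,(v+2b\,\E Z)))$, while in the complementary range the linear term dominates and yields a sub-exponential tail $\exp(-c'\lambda/b)$.

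It then remains to integrate. Using the identity $\E(Z^2-a)_+=\int_{\sqrt a}^{\infty}2u\,P(Z>u)\,du$ with $a=6n^2\Psi^2$, every $u\ge\sqrt6\,n\Psi$ gives $\lambda:=u-\E Z\ge(\sqrt6-1)n\Psi>0$, so the two tail regimes above apply on the whole range of integration. The threshold $\sqrt a=\sqrt6\,n\Psi$ is calibrated precisely so that the subtracted level $6\Psi^2$ absorbs the contribution of $\E Z\le n\Psi$ and of the cross term $2b\,\E Z$ in the denominator; the Gaussian piece then integrates to a term of order $n^2\tau\exp(-n\Psi^2/(6\tau))$ and the exponential piece to a term of order $n^2\psi^2\exp(-Kn\Psi/\psi)$, the explicit value $K=(\sqrt2-1)/(21\sqrt2)$ being exactly the constant carried by the linear regime in \cite{KleinRio2005}. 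Dividing through by $n^{2}$ produces the two stated summands $\tfrac{\tau}{n}\exp(-n\Psi^2/(6\tau))$ and $\tfrac{\psi^2}{n^2}\exp(-Kn\Psi/\psi)$, with a universal constant $C$ collecting the numerical factors from the two integrals.

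The main obstacle is the second step: the concentration inequality for the supremum of a bounded empirical process is the deep ingredient and is not elementary, which is why in this paper it is simply cited. Granting it, the remaining difficulty is purely bookkeeping, namely tracking the numerical constants through the regime split and the tail integration so that the subtracted level comes out as exactly $6\Psi^2$ and the decay rates as exactly $1/(6\tau)$ and $K$. The choice of threshold $\sqrt6\,n\Psi$ and the doubling of the class are what make these particular constants appear, but no new idea is required beyond the cited concentration bound.
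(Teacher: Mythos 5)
Your proposal is sound, but note that the paper itself contains no proof of this lemma at all: it is stated as a cited result, due to \cite{Talagrand1996} in the formulation of \cite{KleinRio2005}, and is used as a black box in the proof of Lemma \ref{gde:mme:re:ta}. What you have reconstructed is the standard derivation of this integrated (moment) form from the Bousquet/Klein--Rio deviation inequality, which is precisely how the cited literature obtains it: symmetrise the class to $\{\pm\nu_h\}$ to reduce $\sup_h|\bar\nu_h|$ to a one-sided supremum, invoke the exponential tail bound $P(Z\geq \E Z+\lambda)\leq\exp\bigl(-\lambda^2/(2(v+2b\,\E Z)+\tfrac23 b\lambda)\bigr)$ for $Z=n\sup_h \bar\nu_h$ with $b=\psi$, $v=n\tau$, split into the sub-Gaussian and sub-exponential regimes, and integrate via $\E(Z^2-a)_+=\int_{\sqrt a}^\infty 2u\,P(Z>u)\,du$ with $a=6n^2\Psi^2$, the threshold $\sqrt6\,n\Psi$ absorbing $\E Z\leq n\Psi$ and the cross term $2b\,\E Z$. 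This is correct in structure and in where the genuine difficulty lies (the concentration inequality itself, which neither you nor the paper proves). One cosmetic slip: the Gaussian piece of the integral is of order $n\tau\exp(-n\Psi^2/(6\tau))$, not $n^2\tau\exp(-n\Psi^2/(6\tau))$ --- the prefactor is the variance scale $v=n\tau$ (the boundary term $n\Psi\sqrt{n\tau}$ being absorbed by weakening the exponent via $xe^{-x^2}\leq Ce^{-x^2/2}$) --- so that dividing by $n^2$ indeed yields the stated $\tfrac{\tau}{n}\exp(-n\Psi^2/(6\tau))$; as written, your $n^2\tau$ would give $\tau$ after normalisation, contradicting your own conclusion. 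The analogous absorption $xe^{-x}\leq e^{-x/2}$ is needed in the exponential regime to reach $\psi^2 n^{-2}\exp(-Kn\Psi/\psi)$. With that bookkeeping corrected, your argument is the standard and correct one.
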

\begin{remark}
	Keeping the bound Eq. \ref{tal:re1}  in mind, let us
	specify particular choices $K$, in fact $K\geq \tfrac{1}{100}$.
	The next bound is now an immediate consequence, 
	\begin{align}
	&\E(\sup_{h\in \mathcal H}|\overline{\nu}_{h}|^2-6\Psi^2)_+\leq C \left(\frac{\tau}{n}\exp\left(\frac{-n\Psi^2}{6\tau}\right)+\frac{\psi^2}{n^2}\exp\left(\frac{-n \Psi}{100\psi}\right) \right)\label{tal:re3} 
	\end{align}
	In the sequel we will make use of the slightly simplified bounds Eq. \ref{tal:re3} rather than Eq. \ref{tal:re1}.
\end{remark}

\section{Proofs of Section \ref{mt}}\label{a:mt}
\begin{proof}[Proof of Proposition \ref{pro:rid:risk:bound}]
First we see that
\begin{align*}
\E_{f_Y}^n(\|f-\widehat f_{k, \xi, r}\|_{x^{2c-1}}^2) &= \|f- \E_{f_Y}^n(\widehat f_{k,\xi, r})\|_{x^{2c-1}}^2 + \E_{f_Y}(\|\widehat f_{k,\xi, r}- \E_{f_Y}^n(\widehat f_{k,\xi, r})\|_{x^{2c-1}}^2) \\
&= \|f- \E_{f_Y}^n(\widehat f_{k,\xi, r})\|_{x^{2c-1}}^2 + \frac{1}{2\pi} \int_{-\infty}^{\infty} \Var_{f_Y}^n(\widehat\sM_c(t))|\mathrm R_{k, \xi, r}(t)|^2dt
\end{align*}
using the Plancherel equality, compare eq \ref{eq:Mel:plan}, and the Fubini-Tonelli theorem.
Considering the bias term, we have for $t\in G_k^c$, $\mathrm R_{k,\xi, r}(t)=\sM_c[g](t)^{-1}$. On the other hand, for $t\in \IR$ we have 
\begin{align*}
|\mathrm R_{k,\xi, r}(t)|=\frac{|\sM_c[g](t)|^{r+1}}{\max(|\sM_c[g](t)|, k^{-1}(1+|t|)^{\xi})^{r+2}} \leq |\sM_c[g](t)|^{-1}.
\end{align*} 
Now the Plancherel equality implies
\begin{align*}
\|f-\E_{f_Y}^n(\widehat f_{k,\xi, r})\|_{x^{2c-1}}^2 &= \frac{1}{2\pi} \int_{-\infty}^{\infty} \left| \sM_c[g](t) \mathrm{R}_{k,\xi, r}(t) - 1\right|^2 |\sM_c[f](t)|^2 dt \\
&=\frac{1}{2\pi} \int_{G_n} \left| \sM_c[g](t) \mathrm{R}_{k,\xi, r}(t) - 1\right|^2 |\sM_c[f](t)|^2 dt\\
&\leq \frac{1}{2\pi} \int_{G_n} |\sM_c[f](t)|^2dt.
\end{align*} 
Now for the variance term, we see directly that
\begin{align*}
\frac{1}{2\pi} \int_{-\infty}^{\infty} \Var_{f_Y}^n(\widehat\sM_c(t)) |\mathrm{R}_{k,\xi, r}(t)|^2 dt &\leq \frac{\sigma_c}{2\pi n} \int_{-\infty}^{\infty} |\mathrm R_{k,\xi, r}(t)|^2 dt.
\end{align*}
which proves the proposition.
\end{proof}

\begin{proof}[Proof of Corollary \ref{cor:rid:consis:ex}]
	To show the corollary, it is sufficient to show that $\|\mathrm R_k\|_{\IR}^2 \leq C_{g,r}k^{2+\gamma^{-1}}$. In detail, we have
	\begin{align*}
	\|\mathrm R_{k} \|_{\IR}^2 = \|\mathrm R_{k} \mathds 1_{G_k}\|_{\IR} + \|\mathrm R_{k} \mathds 1_{G_k^c}\|_{\IR}^2 = \|\mathds 1_{G_k} \sM_c[g]^{r+1} k^{r+2}\|_{\IR}^2 + \|\mathds 1_{G_k^c} \sM_c[g]^{-1}\|_{\IR}^2
	\end{align*}
	using the assumption \textbf{[G1]} and for $r>0\vee (\gamma^{-1}-1)$. The latter restriction ensures that $\sM_c[g]^{r+1} \in \IL^1(\IR)\cap\IL^2(\IR)$. Since $g$ fullfill \textbf{[G1]} we can find positive constants $C_{g,1}, C_{g,2}>0$ only depending on $g$ such that the sets $G_{k,i}:=\IR\setminus [-C_{g,i} k^{\gamma^{-1}}, C_{g,i} k^{\gamma^{-1}}]$ for $i=1, 2$ satisfy the inclusion relationship
	\begin{align*}
	G_{k,1} \subseteq G_k \subseteq G_{k, 2}.
	\end{align*}
	For the first summand we get that 
	\begin{align*}
	k^{2(r+2)}\left\| \mathds 1_{G_{k,2}} \sM_c[g]^{r+1}\right\|_{\IR}^2 = C(g,L,r) k^{2(r+2)} \int_{C_{g,2} k^{\gamma^{-1}}}^{\infty} t^{-2\gamma(r+1)} dt 
	=C_{g, r} k^{2+\gamma^{-1}}
	\end{align*}
	since $\gamma(r+1)>1$. For the second summand we get
	\begin{align*}
	\|\mathds 1_{G_k^c} \sM_c[g]^{-1}\|_{\IR}^2\leq \int_{-C_{g,1}k^{\gamma^{-1}}}^{C_{g,1}k^{\gamma^{-1}}} |\sM_c[g](t)|^{-2}dt \leq C_{g} k^{2+\gamma^{-1}}.
	\end{align*}
	\end{proof}
\begin{proof}[Proof of Theorem \ref{thm:upp:bound}]
	First we see that for $f\in \mathbb D_c^s(L)$ 
	\begin{align*}
	\|\mathds 1_{G_k} \sM_c[f]\|_{\IR}^2 \leq \|\mathds 1_{G_{k,2}} \sM_c[f]\|_{\IR}^2 =\frac{1}{\pi} \int_{C_{g,2} k^{\gamma^{-1}}}^{\infty} |\sM_c[f](t)|^2 dt \leq C(g,L) k^{-2s/\gamma}
	\end{align*}
	staying in the notation of the proof of Corollary \ref{cor:rid:consis:ex}. Further, we have that $\sigma_c=\E_f(X_1^{2(c-1)})\E_g(U_1^{2(c-1)}) \leq C(L,g)$. In total we get
	\begin{align*}
	\E_{f_Y}^n(\|f-\widehat f_{\rho_n, r}\|_{x^{2c-1}}^2) \leq C(g,L, r) (k^{-2s/\gamma} + k^{2+\gamma^{-1}}n^{-1}),
	\end{align*}
	where both summands are balanced by the choice $k_o=n^{\gamma/(2s+2\gamma+1)}$.
	\end{proof}

\section{Proofs of Section \ref{dd}}\label{a:dd}

\begin{proof}[Proof of Theorem\ref{thm:dd:ridge}]
	The proof can be split in two main steps. The first one using a sequence of elementary steps to find a controlable upper bound for the risk of the data-driven estimator. In the second step, we use mainly the Talagrand inequality to show the claim of the theorem. These two steps are expressed through the following lemmata which we state first and proof afterwards.
	\begin{lemma}\label{gde:mme:re:de}
		Under the assumptions of Theorem \ref{thm:dd:ridge} we have for any $k\in \mathcal K_n$,
		\begin{align*}
		\E_{f_Y}^n(\|\widehat f_{\widehat k}-f\|_{x^{2c-1}}^2) \leq &C(\chi_1, \chi_2) \left(\|f-f_{k}\|_{x^{2c-1}}^2+V(k) \right)+108\E_{f_Y}^n(\sup_{k'\in \mathcal K_n} \left(\|\widehat f_{k'} -f_{k'} \|^2_{x^{2c-1}} - \frac{\chi_1}{6}V(k') \right)_+ )\\
		&+C(\chi_1)\E_{f_Y}^n(\sup_{k\in \mathcal K_n} (\widehat V(k)-V(k))_+)
		\end{align*}
		for positive constants $C(\chi_1, \chi_2)$ and $C(\chi_1)$ only depending on $\chi_1$ and $\chi_2$ and $f_k:= \E_{f_Y}^n(\widehat f_k)$.
	\end{lemma}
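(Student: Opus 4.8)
The plan is to run the Goldenshluger--Lepski comparison scheme while tracking the constants. Write $f_k:=\E_{f_Y}^n(\widehat f_k)$; by Proposition~\ref{pro:rid:risk:bound} and the Plancherel identity \eqref{eq:Mel:plan} the mean $f_k$ carries the bias, $\|f-f_k\|_{x^{2c-1}}^2\le (2\pi)^{-1}\|\mathds{1}_{G_k}\sM_c[f]\|_{\IR}^2$, while $\E_{f_Y}^n(\|\widehat f_k-f_k\|_{x^{2c-1}}^2)\le V(k)$. Fix an arbitrary $k\in\mathcal K_n$. Using $(a+b+c)^2\le 3(a^2+b^2+c^2)$ I would split
\[
\|\widehat f_{\widehat k}-f\|_{x^{2c-1}}^2\le 3\|\widehat f_{\widehat k}-\widehat f_{\widehat k\wedge k}\|_{x^{2c-1}}^2+3\|\widehat f_{\widehat k\wedge k}-\widehat f_k\|_{x^{2c-1}}^2+3\|\widehat f_k-f\|_{x^{2c-1}}^2 .
\]
Choosing $k'=\widehat k$ in $\widehat A(k)$ and $k'=k$ in $\widehat A(\widehat k)$ bounds the first two summands by $\widehat A(k)+\chi_1\widehat V(k)$ and $\widehat A(\widehat k)+\chi_1\widehat V(\widehat k)$; since $\chi_2\ge\chi_1$ and $\widehat k$ minimises $\widehat A(\cdot)+\chi_2\widehat V(\cdot)$, one has $\widehat A(\widehat k)+\chi_1\widehat V(\widehat k)\le \widehat A(k)+\chi_2\widehat V(k)$, which leaves
\[
\|\widehat f_{\widehat k}-f\|_{x^{2c-1}}^2\le 6\widehat A(k)+3(\chi_1+\chi_2)\widehat V(k)+3\|\widehat f_k-f\|_{x^{2c-1}}^2 .
\]

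Next I would pass from the random $\widehat V$ to the deterministic $V$ by writing $\widehat V(k)\le V(k)+\sup_{k\in\mathcal K_n}(\widehat V(k)-V(k))_+$ — this produces the last term $C(\chi_1)\E_{f_Y}^n(\sup_k(\widehat V(k)-V(k))_+)$ of the claim — and I would bound $3\|\widehat f_k-f\|_{x^{2c-1}}^2\le 6\|\widehat f_k-f_k\|_{x^{2c-1}}^2+6\|f-f_k\|_{x^{2c-1}}^2$. In $\widehat A(k)$ the summand vanishes for $k'\le k$ (there $\widehat f_{k'\wedge k}=\widehat f_{k'}$), so it reduces to a supremum over $k'\ge k$ of $\|\widehat f_{k'}-\widehat f_k\|_{x^{2c-1}}^2$, into which I insert $f_{k'}$ and $f_k$ via $\widehat f_{k'}-\widehat f_k=(\widehat f_{k'}-f_{k'})-(\widehat f_k-f_k)+(f_{k'}-f_k)$.

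The key structural input is the monotonicity $\mathrm R_{k'}\ge\mathrm R_k$ for $k'\ge k$ (stated in the excerpt) together with the fact that $\sM_c[g]\mathrm R_k=|\sM_c[g]|^{r+2}/\max(|\sM_c[g]|,k^{-1}(1+|t|)^{\xi})^{r+2}\in[0,1]$, since $g$ is real and hence $\sM_c[g](-t)=\overline{\sM_c[g](t)}$. Consequently $t\mapsto |1-\sM_c[g]\mathrm R_k(t)|$ is nonincreasing in $k$, so $\|f-f_{k'}\|_{x^{2c-1}}\le\|f-f_k\|_{x^{2c-1}}$ and therefore $\|f_{k'}-f_k\|_{x^{2c-1}}^2\le 4\|f-f_k\|_{x^{2c-1}}^2$ for every $k'\ge k$. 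This is what anchors the deterministic part at the oracle $k$ rather than letting it grow with $k'$, and it feeds, after applying $(x+y)_+\le x_++y_+$, the term $C(\chi_1,\chi_2)\bigl(\|f-f_k\|_{x^{2c-1}}^2+V(k)\bigr)$.

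Finally I would absorb each remaining centred fluctuation into a fraction of its own variance proxy: for an index $\ell$ I use $\|\widehat f_\ell-f_\ell\|_{x^{2c-1}}^2\le \tfrac{\chi_1}{6}V(\ell)+\bigl(\|\widehat f_\ell-f_\ell\|_{x^{2c-1}}^2-\tfrac{\chi_1}{6}V(\ell)\bigr)_+$ and bound every positive part by $\sup_{k'\in\mathcal K_n}\bigl(\|\widehat f_{k'}-f_{k'}\|_{x^{2c-1}}^2-\tfrac{\chi_1}{6}V(k')\bigr)_+$; the subtracted fractions $\tfrac{\chi_1}{6}V(\ell)$ are paid for by the penalty $\chi_1\widehat V$ carried inside $\widehat A$ (after converting it to $V$ and using that $\|\mathrm R_k\|_{\IR}$ is nondecreasing in $k$), while the accumulated positive parts collapse, after matching numerical constants, into the residual term $108\,\E_{f_Y}^n(\sup_{k'}(\|\widehat f_{k'}-f_{k'}\|_{x^{2c-1}}^2-\tfrac{\chi_1}{6}V(k'))_+)$. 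I expect the main obstacle to be precisely this last step of constant bookkeeping together with the $\widehat V\leftrightarrow V$ conversion: one must verify that the penalty subtracted inside $\widehat A$ dominates the sum of the fractions $\tfrac{\chi_1}{6}V(k')$ extracted at the various scales $k'\ge k$, which is where the factor $2$ in $\widehat V(k)=2\widehat\sigma_c\|\mathrm R_k\|_{\IR}^2n^{-1}$ and the monotonicity of $\|\mathrm R_k\|_{\IR}$ are used, and where the threshold $\chi_1\ge 72$ (so that $\tfrac{\chi_1}{6}\ge 12$) is chosen so that the residual supremum is of the right order for the Talagrand argument of the next lemma. Taking expectations and collecting the deterministic remainders then yields the asserted bound.
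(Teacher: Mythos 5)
Your proposal is correct in substance and follows essentially the same route as the paper's proof: the identical three-term split combined with the minimality of $\widehat k$ to get $6\widehat A(k)+3(\chi_1+\chi_2)\widehat V(k)$ (your pairing of the $\chi_1\widehat V$ terms with $\widehat A(k)$ and $\widehat A(\widehat k)$ is swapped relative to the paper's, but the sum is identical), the same insertion of the means into $\widehat A(k)$ using the monotonicity of $\mathrm R_k$ and $0\le \sM_c[g](t)\mathrm R_k(t)\le 1$ (the paper uses the sharper pointwise bound $\|f_{k'}-f_{k'\wedge k}\|_{x^{2c-1}}\le\|f-f_k\|_{x^{2c-1}}$ where you use the lossier $\|f_{k'}-f_k\|_{x^{2c-1}}^2\le 4\|f-f_k\|_{x^{2c-1}}^2$, which only changes constants), and the same per-index extraction of $\tfrac{\chi_1}{6}V(k')$. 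Two bookkeeping caveats. First, the extraction happens \emph{inside} a supremum, one index at a time, so your worry that the penalty must dominate a \emph{sum} of fractions over the scales $k'\ge k$ is moot, and the threshold $\chi_1\ge 72$ plays no role in this lemma at all (it is used only in the Talagrand step of the companion lemma). Second, and more substantively: converting the subtracted penalty $\chi_1\widehat V(k')$ inside $\widehat A(k)$ to $\chi_1 V(k')$ produces the term $\chi_1\sup_{k'\in\mathcal K_n}(V(k')-\widehat V(k'))_+$, with the \emph{opposite} sign to the $(\widehat V(k)-V(k))_+$ you display in the conclusion; your sketch never says where this term goes, so as written your argument does not quite close to the stated inequality. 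This mismatch mirrors a typo in the paper itself --- its own proof ends with $\E_{f_Y}^n(\sup_{k\in\mathcal K_n}(V(k)-\widehat V(k))_+)$, and that is the quantity which is actually $O(1/n)$ in the next lemma, whereas $\E_{f_Y}^n(\sup_{k\in\mathcal K_n}(\widehat V(k)-V(k))_+)$ is of order $V(K_n)$ since $\E_{f_Y}^n(\widehat V(k))=2V(k)$ --- so the correct resolution is to keep the $(V-\widehat V)_+$ term and to dispose of the positively-signed $\widehat V(k)$ at the oracle index not by your pathwise bound $\widehat V(k)\le V(k)+(\widehat V(k)-V(k))_+$ but simply by taking expectations, $\E_{f_Y}^n(\widehat V(k))=2V(k)$, as the paper implicitly does.
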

	 To be able to apply the Talagrand inequality on the term $\E_{f_Y}^n(\sup_{k'\in \mathcal K_n} \left(\|\widehat f_{k'} -f_{k'} \|^2_{x^{2c-1}} - \frac{\chi_1}{6}V(k') \right)_+ )$ we need to split the process first. To do so, let us define the set $\mathbb U:=\{h\in \IL^2(\IRp,  x^{{2 c- 1}}): \|h\|_{x^{{2 c-1}}} \leq 1\}$. Then  for $k\in\mathcal K_n$ we have $\|\widehat f_{k}- f_{k}\|_{ x^{{2c- 1}}} = \sup_{h\in \mathbb U} \langle\widehat f_{k}- f_{k}, h \rangle_{ x^{{2 c-1}}}$ where 
	\begin{align*}
	\langle \widehat f_{k}-f_{k}, h\rangle_{x^{{2c-1}}}=  \frac{1}{2\pi}\int_{-\infty}^{\infty} \left(\widehat{\sM}_c( t)- \E_{f_Y}^n(\widehat{\sM}_{ c}(t))\right) \mathrm R_{k, r}(t) \sM_c[h](-t) d t
	\end{align*}
	by application of the Plancherel equation, eq. \ref{eq:Mel:plan}. Now for a positive sequence $(c_n)_{n\in \mathbb N}$ we decompose the empirical Mellin transform $\widehat{\sM }_{ c}( t), t\in \IR, $ into
	\begin{align*}
	\widehat{\mathcal M}_{ c}( t):&= n^{-1} \sum_{j=1}^n  Y_j^{{ c-1+i t}} \mathds 1_{(0, c_n)}(Y_j^{{ c- 1}})+ n^{-1} \sum_{j=1}^n  Y_j^{{c- 1+it}} \mathds 1_{[c_n, \infty)}(Y_j^{{ c- 1}})\\
	&=: \widehat{\mathcal M}_{c, 1}( t)+\widehat{\mathcal M}_{ c, 2}(t).
	\end{align*}
	Setting 
	\begin{align*}
	\nu_{ k, i}(h):= \frac{1}{2\pi}\int_{-\infty}^{\infty} \left(\widehat{\sM}_{c,i}( t)- \E_{f_Y}^n(\widehat{\sM}_{ c,i}(t))\right) \mathrm R_{k, r}(t) \sM_c[h](-t) d t\quad 
	\end{align*} 
	for  $h\in \mathbb U,\, i\in \{1,2\}$, we can deduce that 
	\begin{align}\label{eq:two:process}
	\hspace*{-0.7cm}\E_{f_Y}^n(\sup_{k\in \mathcal K_n}( \|\widehat f_{k}-f_{k}\|_{x^{2c-1}}^2- \frac{\chi_1}{6}V(k) )_+)  &\leq 2\E_{f_Y}^n(\sup_{k\in \mathcal K_n}( \sup_{h\in \mathbb U} \nu_{k, 1}(h)^2- \frac{\chi_1}{12} V(k) )_+) 
	+ 2 \E_{f_Y}^n( \sup_{k\in \mathcal K_n} \sup_{h\in \mathbb U}\nu_{k,2}(h)^2)).
	\end{align}
	This decompostion and the following Lemma then proves the claim.
	\begin{lemma}\label{gde:mme:re:ta}
		Under the assumptions of Theorem \ref{thm:dd:ridge} 
		\begin{align*}
		(i) \quad &\E_{f_Y}^n(\sup_{k\in \mathcal K_n}(\sup_{h\in \mathbb U} \nu_{k, 1}(h)^2- \frac{\chi_1}{12} V(k) )_+) \leq \frac{C({g, r, \E_f(X_1^{2(c-1)})}}{n}, \\
		(ii)\quad & \E_{f_Y}^n( \sup_{k\in \mathcal K_n} \sup_{h\in \mathbb U}\nu_{k,2}(h)^2)) \leq \frac{C(\sigma_c,\E_{f_Y}(Y_1^{5(c-1)}))}{n} \text{ and }\\
		(iii) \quad & \E_{f_Y}^n(\sup_{k\in \mathcal K_n} (\widehat V(k)-V(k))_+)\leq \frac{C(\sigma_c,\E_{f_Y}(Y_1^{4(c-1)}))}{n}. 
		\end{align*}
	\end{lemma}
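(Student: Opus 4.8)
The plan is to prove the three estimates separately: \textbf{(i)} via Talagrand's inequality (Lemma~\ref{tal:re}) applied for each fixed $k$ to the process $\{\nu_{k,1}(h):h\in\mathbb U\}$, and \textbf{(ii)}--\textbf{(iii)} by elementary moment computations. Throughout I would keep the truncation level $c_n$ free and fix it only at the end to $c_n\asymp\sqrt n/\log n$, the choice that renders \textbf{(i)} and \textbf{(ii)} simultaneously of order $n^{-1}$. For \textbf{(i)}, fix $k\in\mathcal K_n$; after replacing $\mathbb U$ by a countable dense subset (justified by continuity of $h\mapsto\nu_{k,1}(h)$), Lemma~\ref{tal:re} requires the three constants $\Psi,\tau,\psi$. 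The Plancherel identity \eqref{eq:Mel:plan} together with $\Var_{f_Y}(\widehat\sM_{c,1}(t))\le n^{-1}\E_{f_Y}(Y_1^{2(c-1)}\mathds 1_{(0,c_n)}(Y_1^{c-1}))\le\sigma_c/n$ gives $\E_{f_Y}^n(\sup_{h\in\mathbb U}\nu_{k,1}(h)^2)\le V(k)/(2\pi)$, so I take $\Psi^2:=V(k)/(2\pi)$; the hypothesis $\chi_1\ge72$ is exactly what secures $6\Psi^2\le\tfrac{\chi_1}{12}V(k)$, hence $(\sup_h\nu_{k,1}(h)^2-\tfrac{\chi_1}{12}V(k))_+\le(\sup_h\nu_{k,1}(h)^2-6\Psi^2)_+$. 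The truncation at $c_n$ bounds each per-observation summand, yielding $\psi=(2\pi)^{-1/2}c_n\|\mathrm R_k\|_{\IR}$.

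The decisive point is the variance factor $\tau$. The crude bound $\tau\le\sigma_c\|\mathrm R_k\|_{\IR}^2/(2\pi)$ is useless, since it makes $n\Psi^2/\tau$ of constant order and the resulting exponential non-summable over $\mathcal K_n$. Instead I would exploit $\|g\|_{\infty,x^{2c-1}}<\infty$: from $f_Y=f*g$ one gets $\sup_x x^{2c-1}f_Y(x)\le\|g\|_{\infty,x^{2c-1}}\E_f(X_1^{2(c-1)})=:C_0<\infty$, and passing to the logarithmic variable a Plancherel computation improves the variance bound to $\tau\le C_0\|\mathrm R_k\|_\infty^2$. Under \textbf{[G1]} the construction of $G_k$ forces $\|\mathrm R_k\|_\infty\le k$, so $\tau\le C_0k^2$ while $\Psi^2\asymp\sigma_ck^{2+\gamma^{-1}}/n$; therefore $n\Psi^2/\tau\asymp k^{\gamma^{-1}}$. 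Summing the two Talagrand contributions over $k\in\mathcal K_n$ then produces the convergent series $\tfrac{C_0}{n}\sum_k k^2e^{-ck^{\gamma^{-1}}}$ from the first term and, after choosing $c_n\asymp\sqrt n/\log n$ small enough that $e^{-Kn\Psi/\psi}\le n^{-2}$, an $O(1/n)$ contribution from the second, which establishes \textbf{(i)}.

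For \textbf{(ii)} no concentration is needed. By \eqref{eq:Mel:plan} and the monotonicity $\mathrm R_{k}\le\mathrm R_{k+1}$, one has $\E_{f_Y}^n(\sup_{k\in\mathcal K_n}\sup_{h\in\mathbb U}\nu_{k,2}(h)^2)\le(2\pi n)^{-1}\E_{f_Y}(Y_1^{2(c-1)}\mathds 1_{[c_n,\infty)}(Y_1^{c-1}))\|\mathrm R_{k_{\max}}\|_{\IR}^2$, where $k_{\max}=\max\mathcal K_n$ and $\|\mathrm R_{k_{\max}}\|_{\IR}^2\le n$; on the truncation set $1\le(Y_1^{c-1}/c_n)^3$, so this is at most $c_n^{-3}\E_{f_Y}(Y_1^{5(c-1)})/(2\pi)=O(n^{-3/2}(\log n)^3)=O(1/n)$, which is where the fifth-moment hypothesis enters. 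For \textbf{(iii)} the factor $2$ in $\widehat V(k)=2\widehat\sigma_c\|\mathrm R_k\|_{\IR}^2/n$ is essential: since $\sup_{k\in\mathcal K_n}\|\mathrm R_k\|_{\IR}^2/n\le1$, the discrepancy between $\widehat V$ and $V$ is governed by the deviation of $\widehat\sigma_c=n^{-1}\sum_jY_j^{2(c-1)}$ from $\sigma_c$, which is nontrivial only on the large-deviation event $\{|\widehat\sigma_c-\sigma_c|>\sigma_c/2\}$; Chebyshev's inequality with $\Var(\widehat\sigma_c)=n^{-1}\Var_{f_Y}(Y_1^{2(c-1)})$ bounds its probability by $C(\sigma_c,\E_{f_Y}(Y_1^{4(c-1)}))/n$, explaining the fourth-moment hypothesis.

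The main obstacle is the variance factor $\tau$ in step \textbf{(i)}: the whole argument stands or falls on replacing the trivial bound $\|\mathrm R_k\|_{\IR}^2$ by $\|\mathrm R_k\|_\infty^2$, which is legitimate only because the weighted sup-norm assumption on $g$ makes $x^{2c-1}f_Y$ bounded. Without this refinement the exponentials produced by Talagrand's inequality would carry no decay in $k$ and the sum over $\mathcal K_n$ would fail to be $O(1/n)$; the remaining estimates are then routine once $c_n$ is tuned to balance \textbf{(i)} and \textbf{(ii)}.
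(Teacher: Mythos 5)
Your proposal is correct and follows essentially the same route as the paper's own proof: for (i) the identical Talagrand application with $\Psi^2\asymp V(k)$, $\psi\asymp c_n\|\mathrm R_k\|_{\IR}$, the decisive variance refinement $\tau\leq \|g\|_{\infty,x^{2c-1}}\E_f(X_1^{2(c-1)})\|\mathrm R_k\|_\infty^2$ with $\|\mathrm R_k\|_\infty\leq Ck$ (so that $n\Psi^2/\tau\asymp k^{\gamma^{-1}}$ makes the sum over $\mathcal K_n$ converge), and the same truncation $c_n\asymp\sqrt n/\log n$. Parts (ii) and (iii) also match the paper exactly, namely the monotonicity of $\mathrm R_k$ together with $\|\mathrm R_{K_n}\|_{\IR}^2\leq n$ and the $p=3$ moment bound $\1_{[c_n,\infty)}(Y_1^{c-1})\leq (Y_1^{c-1}/c_n)^3$, and the deviation event $\{|\widehat\sigma_c-\sigma_c|>\sigma_c/2\}$ controlled through $\Var(\widehat\sigma_c)$.
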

\end{proof}

\begin{proof}[Proof of Lemma \ref{gde:mme:re:de}]
		Since $\chi_2\geq \chi_1$ and  by the definition of $\widehat{k}$ follows for any $k\in \mathcal K_n$,
	\begin{align*}
	\|f-\widehat f_{\widehat{k}}\|_{x^{2c-1}}^2 &\leq 3\|f-\widehat f_{k}\|_{ x^{2c-1}}^2 +3 \|\widehat f_{k}-\widehat f_{k \wedge \widehat{k}}\|_{x^{2c-1}}^2+ 3\|\widehat f_{k \wedge \widehat{k}}-\widehat f_{\widehat{k}}\|_{x^{2c-1}}^2 \\
	&\leq 3\|f-\widehat f_{k}\|_{x^{2c-1}}^2 + 3( \widehat A(\widehat {k})+ \chi_1\widehat V(k)+ \widehat A(k)+ \chi_1\widehat V(\widehat{k})) \\
	& \leq 3\|f-\widehat f_{k}\|_{x^{2c-1}}^2 + 3(2 \widehat A(k)+ (\chi_1+\chi_2)\widehat V(k)).
	\end{align*}
	To simplify the notation, let us set $\chi:= (\chi_1+\chi_2)/2$.
	Let us now have a closer look at $\widehat A(k)$. From \begin{align*}
	\|\widehat f_{k'} - \widehat f_{k'\wedge k} \|_{x^{2c-1}}^2  &\leq 3 (\|\widehat f_{k'} -f_{k'} \|^2_{x^{2c-1}} +\|\widehat f_{k'\wedge k} -f_{k'\wedge k} \|^2_{x^{2c-1}} +\| f_{k‘} -f_{k'\wedge k} \|^2_{x^{2c-1}} ) \\
	& \leq 6 \|\widehat f_{k'} -f_{k'} \|^2_{x^{2c-1}} + 3\| f -f_{k} \|^2_{x^{2c-1}} 
	\end{align*} we conclude by a straight forward calculation that
	\begin{align*}
	\widehat A(k) \leq &6\sup_{k'\in \mathcal K_n} \left(\|\widehat f_{k'} -f_{k'} \|^2_{x^{2c-1}} - \frac{\chi_1}{6}V(k') \right)_+
	+3\| f - f_{k}\|_{x^{2c-1}}^2 + \chi_1\sup_{k'\in \mathcal K_n} ( V(k')-\widehat V(k'))_+.
	\end{align*}
	This implies 
	\begin{align*}
	\hspace*{-0.8cm}\E_{f_Y}^n(\|f-\widehat f_{\widehat{k}}\|_{x^{2c-1}}^2 ) \leq C(\chi) &\left(\|f-f_{k}\|_{x^{2c-1}}^2+V(k) \right)+108\E_{f_Y}^n(\sup_{k'\in \mathcal K_n} (\|\widehat f_{k'} -f_{k'} \|^2_{x^{2c-1}} - \frac{\chi_1}{6}V(k') )_+ )\\
	&+C(\chi_1)\E_{f_Y}^n(\sup_{k\in \mathcal K_n} (V(k)-\widehat V(k))_+) .
	\end{align*}
	\end{proof}

\begin{proof}[Proof of Lemma \ref{gde:mme:re:ta}]
	\underline{Proof of $(i)$:} 
	Now on the first summand of the right hand side of \ref{eq:two:process} we can apply the Talagrand. Let us start with the first term. We use that
	\begin{align*}
	\E_{f_Y}^n(\sup_{k\in \mathcal K_n}( \sup_{h\in \mathbb U} \nu_{k, 1}(h)^2- \frac{\chi_1}{12} V(k) )_+)  \leq \sum_{k=1}^{K_n} \E_{f_Y}^n(( \sup_{h\in \mathbb U} \nu_{k, 1}(h)^2- \frac{\chi_1}{12} V(k) )_+) 
	\end{align*}
	where $K_n:=\max(\mathcal K_n)$.
	To apply now the Talagrand inequality, compare Lemma \ref{tal:re},  to each summand we need to determine the constants $\Psi^2, \psi^2$ and $\tau$ first. Staying in the notation of the Talagrand inequality, we set for  $h\in \mathbb U$, 
	\begin{align*}
	\nu_h(y):= \frac{1}{2\pi} \int_{-\infty}^{\infty} y^{c-1+it} \mathds 1_{(0, c_n)}(y) \mathrm R_{k}(t) \sM_c[h](-t)dt,\quad y\in \IRp.
	\end{align*}
	 Now applying Cauchy-Schwartz inequality 
	\begin{align*}
\nu_{k,1}^2(h) \leq \frac{\|h\|_{x^{2c-1}}^2}{2\pi} \int_{-\infty}^{\infty} |\widehat\sM_{c,1}(t)- \E_{f_Y}^n(\widehat\sM_{c,1}(t))|^2 |\mathrm{R}_k(t)|^2dt  \leq \frac{1}{2\pi} \int_{-\infty}^{\infty} |\widehat\sM_{c,1}(t)- \E_{f_Y}^n(\widehat\sM_{c,1}(t))|^2 |\mathrm{R}_k(t)|^2dt  
	\end{align*} since $h\in \mathbb U$. We deduce that
	\begin{align*}
	\E_{f_Y}^n(\sup_{h\in \mathbb U} \nu_{k,1}(h)^2) \leq \frac{1}{2\pi} \int_{-\infty}^{\infty} \E_{f_Y}^n(|\widehat\sM_{c,1}(t)- \E_{f_Y}^n(\widehat\sM_{c,1}(t))|^2) |\mathrm{R}_k(t)|^2dt  \leq \sigma_c \|\mathrm R_k\|_{\IR}^2n^{-1}=:\Psi^2,
	\end{align*}
	compare proof of Proposition \ref{pro:rid:risk:bound}. For $y>0$ we have $|\nu_h(y)|^2\leq c_n^2 \|\mathrm R_{k}\|_{\IR}^2\|\sM_c[h]\|_{\IR}^2/(2\pi) \leq c_n^2\|\mathrm R_{k}\|_{\IR}^2=:\psi^2$ since $h\in \mathbb U$. Additionally, we have for any $h\in \mathbb U$ that
	$\Var_{f_Y}^n(\nu_h(Y_1))\leq \E_{f_Y}^n(\nu_h^2(Y_1)) \leq \|f_Y\|_{\infty, x^{2c-1}} \|\nu_h\|_{x^{1-2c}}^2$. More precisely, we see that 
	\begin{align*}
	y^{2c-1} \int_0^{\infty} f(x) g(y/x)x^{-1}dx \leq \| g\|_{\infty, x^{2c-1}} \E(X_1^{2(c-1)}), \quad y \in \IRp.
	\end{align*}
	Next, we have 
	\begin{align*}
		\|\nu_h\|_{x^{1-2c}}^2 \leq \frac{1}{2\pi} \int_{-\infty}^{\infty} |\sM_c[h](t)|^2 |\mathrm R_{k}(t)|^2 dt \leq \|\mathrm R_{k}^2\|_{\infty} \frac{1}{2\pi}\|\sM_c[h]\|_{\IR}^2 \leq  \|\mathrm R_{k}^2\|_{\infty}
		\end{align*}
		which implies the choice $\tau:= \|g\|_{\infty, x^{2c-1} } \E_f(X_1^{2(c-1)}) \|\mathrm R_{k}^2\|_{\infty}$.	Applying now the Talagrand inequality we get\begin{align*}
		\E_{f_Y}^n((\sup_{h\in \Uz} \overline\nu_h^2 -6\Psi^2)_+) &\leq \frac{C_{f_Y}}{n} \left(\|\mathrm R_{k}^2\|_{\infty} \exp(-C_{f_Y} \frac{\|\mathrm R_k\|_{\IR}^2}{\|\mathrm R_{k}^2\|_{\infty}}) + c_n^2 \exp(-\frac{\sqrt{n\sigma_c}}{100 c_n})\right) \\
		& \leq \frac{C_{f_Y}}{n} \left( \|\mathrm R_{k}^2\|_{\infty} \exp(-C_{f_Y} \frac{\|\mathrm R_k\|_{\IR}^2}{\|\mathrm R_{k}^2 \|_{\infty}}) + n^{-1} \right)
		\end{align*}
		for the choice $c_n:= \sqrt{n\sigma}/(100\log(n^{2}))$. Following the same step as in the proof of \ref{cor:rid:consis:ex}, we can state that $K_n\leq C_{g,r} n^{\gamma/(2\gamma+1)}\leq C_{g,r} n^{1}$. For $\chi_1 \geq 72$ we can conclude that
		\begin{align*}
		\hspace*{-0.7cm}\E_{f_Y}^n(\sup_{k \in \mathcal K_n} \left( \sup_{h\in \Uz} \nu_{k, 1}(h)^2- \frac{\chi_1}{12} V(k) \right)_+)  &\leq \sum_{k=1}^{K_n}\frac{C_{f_Y}}{n} \left( \|\mathrm R_{k}^2\|_{\infty} \exp(-C_{f_Y} \frac{\|\mathrm R_k\|_{\IR}^2}{\|\mathrm R_{k}^2\|_{\infty}}) + n^{-1} \right)\\
		&\leq \frac{C_{f_Y}}{n} (1 + \sum_{k=1}^{K_n} \|\mathrm R_{k}^2\|_{\infty} \exp(-C_{f_Y} \frac{\|\mathrm R_k\|_{\IR}^2}{\|\mathrm R_{k}^2\|_{\infty}})). 
		\end{align*}
		Now it can easily be seen that there exists constants $c_{g,r}, C_{g,r}>0$ such that $ c_{g,r}k^{2+\gamma^{-1}} \leq \|\mathrm R_k\|_{\IR}^2\leq C_g k^{2+\gamma^{-1}}$ using \textbf{[G1]}. By  simple calculus, one can show that $\|\mathrm R_{k}^2\|_{\infty} \leq C_{g,r} k^{2}$. Since $(k^2 \exp(-C_{f_Y} k^{\gamma^{-1}}))_{k\in \mathbb N}$ is summable we can deduce that $\E_{f_Y}^n(\sup_{k \in \mathcal K_n} \left( \sup_{h\in \Uz} \nu_{k, 1}(h)^2- \frac{\chi_1}{12} V(k) \right)_+)  \leq  C_{f_Y} n^{-1}.$\\
		\underline{Let us now show part $(ii):$} For any $h\in \mathbb U$ and $k\in \mathcal K_n$ we get \begin{align*}
		\nu_{k,2}(h)^2 \leq \frac{\|h\|_{x^{2c-1}}^2}{2\pi} \int_{-\infty}^{\infty}|\widehat\sM_{c, 2}(t)- \E_{f_Y}^n(\widehat\sM_{c, 2}(t)|^2 |\mathrm R_{k}(t)|^2 dt  \leq  \frac{1}{2\pi} \int_{-\infty}^{\infty}|\widehat\sM_{c, 2}(t)- \E_{f_Y}^n(\widehat\sM_{c, 2}(t))|^2 |\mathrm R_{K_n}(t)|^2 dt 
		\end{align*}
		and thus, since $\mathrm R_{k}(t) \geq \mathrm R_{\ell}(t)$ for all $t\in \IR$ and $k\geq \ell$, 
		\begin{align*}
		\E_{f_Y}^n( \sup_{k\in \mathcal K_n} \sup_{h\in \mathbb U}\nu_{k,2}(h)^2))  \leq \frac{1}{n}\|\mathrm R_{K_n}\|_{\IR}^2 \E_{f_Y}(Y_1^{2(c-1)} \mathds 1_{[c_n, \infty)}(Y_1^{c-1})).
		\end{align*}
		Now by definition of $\mathcal K_n$ we know that $\|\mathrm R_{K_n}\|_{\IR}^2 n^{-1} \leq 1$. We deduce that for any $p>0$ 
		\begin{align*}
		\E_{f_Y}^n ( \sup_{m\in \mathcal K_n} \sup_{h\in \mathbb U}\nu_{k,2}(h)^2)) \leq c_n^{-p} \E_{f_Y}(Y_1^{(2+p)(c-1)}) \leq \frac{C(\sigma_c,\E_{f_Y}(Y_1^{5(c-1)}))}{n}
		\end{align*}
		choosing $p=3$ and by the definition of $(c_n)_{n\in \mathbb N}$.\\
		\underline{Part $(iii)$:} First we see that for any $k\in \mathcal K_n$,  $(V(k)-\widehat V(k))_+ =\|\mathrm R_k\|_{\IR}^2n^{-1}(\sigma_c-2\widehat\sigma_c)_+\leq (\sigma_c-2\widehat\sigma_c)_+$.
		On $\Omega=\{|\widehat\sigma_c-\sigma_c|\leq \sigma_c/2\}$ we have  $\frac{\sigma_c}{2} \leq \widehat{\sigma}_c\leq \frac{3}{2} \sigma_c$. This implies
		\begin{align*}
		\E_{f_Y}^n(\sup_{k\in \mathcal K_n} (V(k)-\widehat V(k))_+)\leq \E_{f_Y}^n((\sigma_c-2\widehat\sigma_c)_+)\leq 2\E_{f_Y}^n(|\sigma_c-\widehat\sigma_c|\mathds 1_{\Omega^c})\leq 4 \frac{\Var_{f_Y}^n(\widehat\sigma_c)}{\sigma_c}
		\end{align*}
		applying the Cauchy Schwartz inequality and the Markov inequality. Now the last inequality implies the claim.
	\end{proof}

\section*{References}

\bibliographystyle{myjmva}
\bibliography{MuDERA}

\end{document}